\newtheorem{theorem}{Theorem}[section]
\newtheorem{lemma}[theorem]{Lemma}
\theoremstyle{definition}
\DeclareMathOperator{\Cov}{COV}
\DeclareMathOperator{\Var}{Var}
\numberwithin{equation}{section}
\numberwithin{equation}{section}
\theoremstyle{remark}
\numberwithin{equation}{section}
\DeclareMathOperator*{\argmin}{arg\,min}
\begin{document}

\bibliographystyle{plain}


\title{An Analytical Formula for Spectrum Reconstruction}

\author{Zhibo Dai$^{*}$}
\author{Heinrich Matzinger}
\author{Ionel Popescu}
\address{School of Mathematics, Georgia Institute of Technology, Altanta, GA 30313}

\email{zdai37@gatech.edu}
\email{matzi@math.gatech.edu}
\email{ipopescu@math.gatech.edu}

\renewcommand{\shortauthors}{Z.Dai, H.Matzinger, I.Popescu}


\date{May.8 2019}

\keywords{Spectrum Reconstruction, Free Probability}

\begin{abstract}
{We study the spectrum reconstruction technique. As is known to all, eigenvalues play an important role in many research fields and are foundation to many practical techniques such like PCA(Principal Component Analysis). We believe that related algorithms should perform better with more accurate spectrum estimation. There was an approximation formula proposed by\cite{amsalu2018recovery}, however, they didn't give any proof. In our research, we show why the formula works. And when both number of features and dimension of space go to $\infty$, we find the order of error for the approximation formula, which is related to a constant $c$-the ratio of dimension of space and number of features.}
\end{abstract}

\maketitle

\section{Introduction}
\label{intro}
Our research is about a simple analytical formula for the difference between the sample covariance and ground truth covariance spectrum of large multivariate normal data. We let both of the sample size and the dimension, in which the
data lives, go to infinity at the same time. We show why a simple analytical approximation formula for the difference between sample spectrum and ground truth spectrum \ref{useful_approx} and similarly \ref{blue} holds in certain cases. These formulas have already been introduced in \cite{amsalu2018recovery}, but are without a justification of why they should hold. In section \ref{section_larger_order}, we show that the approximation \ref{blue} holds when a given condition \ref{condition_sample_size} on the size of the sample holds. This condition is for eigenvalues, which are of somewhat larger order at least
$O(n^{0.5})$ as we argue. Note that this result is not asymptotic and we use the result of Lounici and Koltchinskii \cite{koltchinskii2017normal} allowing to bound the error matrix in covariance estimation.

In Section \ref{section_large_constant}, we consider the situation where the sample size is a large constant times
the space dimension. We show that by taking the constant big enough, we get approximation \ref{useful_approx} to hold as good as we want (relative error as small as we want) on the inside.

Let us first give the background of the problem:
Assume  that $Z$ is a $n$ by $p$ data matrix. Assume, for example, that each column of $Z$ is a point in a machine learning problem. Note that the product matrix  $Z^t Z$ contains all the  inner products between columns. Here $Z^t$ represents the transpose of $Z$. It is then easy to see that from $Z^t Z$ we can find all the relative positions of the column vectors with respect to each other. We view them as vectors of $\mathbb{R}^n$. So, any machine learning algorithm, of which output depends only on the relative position of points to each other, would only need $Z^t Z$ as input rather than $Z$ given the input points for the algorithm are the columns of $Z$.

We consider a random matrix $Z$ with i.i.d. columns distributed each like a random vector $\vec{Z}=(Z_1,Z_2,\ldots,Z_p)$, which we assume to have zero expectation due to standardizing the data  done before using most machine learning algorithms. Then, the {\it sample covariance matrix } is defined as:
\begin{equation}
\label{sample_covariance}
\hat{\Cov}[\vec{Z}]=\frac{Z^t \cdot Z}{n}
\end{equation}
and is an unbiased estimate of the covariance $COV[\vec{Z}]$. The covariance matrix
\begin{equation}
\label{covariance}
\Cov[\vec{Z}]=(E[Z_i\cdot Z_j])_{ij}
\end{equation}
will be  called the {\it ground truth covariance}.
Now, the sample covariance as estimate of the ground truth covariance is very bad as long as $n<p$ since then \ref{sample_covariance} is defective, which means it has some eigenvalues approximating $0$ assuming $COV[\vec{Z}]$ has no zero eigenvalues. This is the problem that affects many machine learning algorithms and it is also called the curse of dimensionality. In traditional statistics, one assumes $p$ fixed while $n$ goes to infinity. In modern high dimensional statistics, one lets $n=c\cdot p$, where $c$ is a constant not depending on $p$.
This implies that both $n$ and $p$ go to infinity at the same time, which is the situation we consider in our research. In the case that $c$ is not large enough, as already mentioned the sample covariance is a very bad estimate of the true covariance since its small eigenvalues will be much smaller than the corresponding eigenvalues of ground truth covariance.

\begin{figure} \centering
\subfigure[]
  {\label{comparison_c_0.5}
  \includegraphics[width=0.47\textwidth]{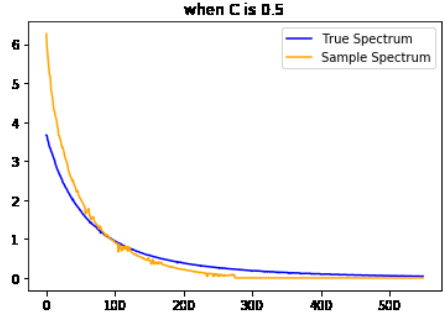}
  }
  \caption{Covariance Spectrum vs Ground Truth Spectrum when n = p/2}
\end{figure}

\begin{figure} \centering
\subfigure[]
  {\label{comparison_c_2}
  \includegraphics[width=0.47\textwidth]{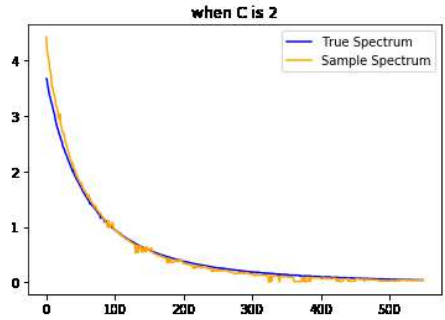}
  }
  \caption{Covariance Spectrum vs Ground Truth Spectrum when n = 2*p}
\end{figure}

Now we will assume that we are dealing with multivariate normal vector $\vec{Z}$. We denote by $\sigma^2_j$, the $j$-th eigenvalue of the covariance $\Cov[\vec{Z}]$ in descending order. So the spectrum of the covariance matrix is
\begin{equation}
\label{spectrum}
\sigma_1^2\geq \sigma_2^2\geq\ldots\geq\sigma_p^2
\end{equation}
and the corresponding eigenvalues of the sample covariance will  be denoted by $\hat{\sigma}^2_j$
and hence
\begin{equation}
\label{sample_spectrum}
\hat{\sigma}_1^2\geq \hat{\sigma}_2^2\geq\ldots\geq\hat{\sigma}_p^2.
\end{equation}
The topic of this chapter 1 is reconstruction of \ref{spectrum} when given only \ref{sample_spectrum}
in the context of normal data. Note that when both $n$ and $p$ go to infinity, the fluctuation of \ref{sample_spectrum} is of smaller order than the values themselves. So for practical purposes we can consider the spectrum \ref{sample_spectrum} to be non-random. In traditional statistics, where $p$ is fixed as $n$ goes to infinity the opposite is true. The unit-eigenvectors of the covariance matrix \ref{covariance}, are called {\it Principal Components}. When we represent the vector $\vec{Z}$ in  the coordinate system of the Principal Components
the new coordinates are uncorrelated. For a normal vector this implies independence. So, let
$$\vec{X}=(X_1,X_2,\ldots,X_p)$$
denote the vector $\vec{Z}$ expression in the Principal Components of $\Cov[\vec{Z}]$.
So the random vector $\vec{X}$ is a normal vector with independent normal components where
$$VAR[X_i]=\sigma^2_i$$
for $i= 1,2,\ldots,p$. So, we will also express the data matrix in the coordinate system of the PCA, which means that each row of $Z$ is going to be expressed in the basis of the PCA. Hence, we will not work with the data matrix $Z$, but instead with a data matrix $X$, where each row is an independent copy of $\vec{X}$. This means that
$X$ is a $n\times p$ matrix, with i.i.d rows where columns are also independent. In the $j$-th column we have normal entries with $0$ expectation and variance $\sigma_j^2$. So, from here on the sample covariance is defined by
$$\hat{\Cov}[\vec{X}]:=\frac{X^t\cdot X}{n}$$
and the ground truth covariance is the diagonal matrix
$$\Cov[\vec{X}]=\left(
\begin{array}{ccccccccc}
\sigma_1^2&0&0&\ldots&0\\
0&\sigma^2_2&0&\ldots&0\\
0&0&\sigma^2_3&\ldots&0\\
\ldots\\
0&0&0&\ldots&\sigma^2_p
\end{array}\right)
$$
We will also designate the ground truth covariance by $\Sigma_p$
and hence
$$\Sigma_p:=\Cov[\vec{X}]$$

Now let $\Sigma_p^{1/2}$ be the  square root of $\Sigma_p$::
$$\Sigma^{1/2}_p=\sqrt{\Cov[\vec{X}]}=\left(
\begin{array}{ccccccccc}
\sigma_1&0&0&\ldots&0\\
0&\sigma_2&0&\ldots&0\\
0&0&\sigma_3&\ldots&0\\
\ldots\\
0&0&0&\ldots&\sigma_p
\end{array}\right)$$
Then the data matrix $X$ has same distribution as $N\cdot \Sigma^{1/2}_p$,
where $N$ is a $n$ times $p$ matrix of i.i.d. standard normal entries.
Hence, we can write the sample covariance as
\begin{equation}
\label{sample_cov_X}
\hat{\Cov}[\vec{X}]=\frac{X^t\cdot X}{n}=\frac{\Sigma^{1/2}_p N^t\cdot N \Sigma^{1/2}_p}{n}
\end{equation}
Now, we use the property that the product of square matrices $A\cdot B$ has same spectrum as $B\cdot A$. We let $A$ be the matrix$\Sigma^{1/2}_p$, and $B$ be $N^t\cdot N \Sigma^{1/2}_p$. So, after applying the rule that $AB$ and $BA$ have same spectrum we find that the spectrum of
\begin{equation}
\label{N^tNSigma}
BA=\frac{(N^t\cdot N)}{n}\cdot \Sigma_p
\end{equation}
is identical with the spectrum of the sample covariance \ref{sample_cov_X}, which is $AB$.

Now free probability theory tells us that the spectrum of the product of two independent symmetric random square matrices converges as long as each matrix's spectrum converges. Actually in order to have the convergence, we need random matrices to be unitary invariant, which is indeed the case here as $N^t N$ satisfies the condition. For this we let the dimension of the matrices go to infinity. During that convergence process, their spectrum converges to a "finite" limiting distribution.  The limit of the product's spectrum is called the {\it Free Product} of the limiting distributions of each spectrum taken separately. We can apply this to the product on the right side of \ref{N^tNSigma} as long as the spectrum of $\Sigma_p$ converges, when $p$ goes to infinity. In that case, the product must converge to a free product. That is the free product of the limiting distribution for $\Sigma_p$, with that  of $\frac{(N^t\cdot N)}{n}$.
In 1967, Vladimir Mar{\v c}enko and Leonid Pastur \cite{marvcenko1967distribution} successfully constructed the limiting law of $\frac{(N^t\cdot N)}{n}$, which is now named after the authors, namely Marchenko–Pastur distribution. This is the case when both $n$ and $p$ go to infinity at the same time. Therefore, $\frac{n}{p}$ converges to a non-zero fixed limit, which we denote by $c$. The limiting law depends on $c$.

So, from our explanation of free probability, in the  case of  $\Cov[{X}]$ 's spectrum admitting a limiting law $F^{\Sigma}$, the sample spectrum is a free product of Mar{\v c}enko-Pastur law and $F^{\Sigma}$. By computing the $S$-transform explicitly, one can obtain a formula of the limiting law of the sample spectrum. See Bai and Yin \cite{bai1988convergence}, Yin, Bai and Krishnaiah \cite{yin1983limiting}, Silverstein \cite{silverstein1995strong}, and many others. The main result is summarized as follows:

\begin{theorem}
	Given the following conditions,
	\begin{enumerate}
		\item{} 	Suppose entries of $N_p=(N_{i,j})_{n\times p}$ are i.i.d. real random variables for all $p$.
		
		\item{} $E[N_{1,1}]=0$, $E[|N_{1,1}|^2]=1$.
		
		\item{} Let $n/p \to c >0$ as $p\to \infty$.
		
		\item{} Let $\Sigma_p$ $(p\times p)$ be non-negative definite symmetric random matrix with spectrum distribution $F^{\Sigma_p}$ (If $\{\lambda_i\}_{1\le i \le p}$ are the eigenvalues of $\Sigma_p$, then $F^{\Sigma_p}=\sum_{1}^{p} \frac{1}{p} \delta_{\lambda_i}(x)$)  such that $F^{\Sigma_p}$ almost surely converges weakly to $F^{\Sigma}$ on $[0, \infty)$.
		
		\item{} $N_p$ and ${\Sigma}_p$ are independent.
	\end{enumerate}
	then the spectrum distribution of $W_p= \frac{1}{n}{\Sigma}_p^{1/2}N_p^T N_p {\Sigma}_p^{1/2}$, denoted as $F^{W_p}$  almost surely converges weakly to $F^W$. $F^W$ is the unique probability measure whose Stieltjes transform $m(z)= \int \frac{d F^W(x)}{x-z}$, $z\in \mathbb{C}^+$ satisfies the equation
	\begin{equation} \label{eqn:sample spectrum}
	-\frac{1}{m}=z- c\int \frac{t }{1+tm} d F^{\Sigma}(t) \quad \forall z \in \mathbb{C}^+
	\end{equation}
	
\end{theorem}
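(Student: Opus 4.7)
The plan is to follow the resolvent / Stieltjes-transform method of Marchenko--Pastur and Silverstein; this is equivalent to the S-transform computation hinted at in the text (free multiplicative convolution of the Marchenko--Pastur law with $F^{\Sigma}$), but the resolvent route delivers the almost-sure mode of convergence without separately invoking asymptotic freeness and gives a direct path to the equation as stated.

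The first reduction is to pass to the companion matrix $\tilde W_n=\frac{1}{n}N_p\Sigma_p N_p^T\in\mathbb R^{n\times n}$, which shares the non-zero spectrum of $W_p$, so the two normalized traces are related by $p m_p(z)-n\tilde m_n(z)=(n-p)/z$ with $\tilde m_n(z)=\frac{1}{n}\mathrm{tr}(\tilde W_n-zI_n)^{-1}$. Writing $\tilde W_n=\frac{1}{n}\sum_{k=1}^{p}\sigma_k^2\,N_{\cdot k}N_{\cdot k}^T$, the identity $\tilde W_n(\tilde W_n-zI_n)^{-1}=I_n+z(\tilde W_n-zI_n)^{-1}$ combined with Sherman--Morrison applied to each column $N_{\cdot k}$ converts the normalized trace equation into
\begin{equation*}
1+z\tilde m_n(z)=\frac{1}{n}\sum_{k=1}^{p}\frac{\sigma_k^2\,\gamma_k(z)}{1+\sigma_k^2\,\gamma_k(z)},\qquad \gamma_k(z)=\frac{1}{n}N_{\cdot k}^T\tilde R_{(k)}(z)N_{\cdot k},
\end{equation*}
where $\tilde R_{(k)}$ is the resolvent of $\tilde W_n$ with the $k$-th rank-one term removed and is therefore independent of $N_{\cdot k}$. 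Quadratic-form concentration (Burkholder's martingale inequality for i.i.d.\ finite-variance entries, or Hanson--Wright in the subgaussian case) yields $\gamma_k(z)=\frac{1}{n}\mathrm{tr}\,\tilde R_{(k)}(z)+o(1)$ uniformly in $k$ and in $z$ on compacts of $\mathbb C^+$, and the rank-one bound $|\mathrm{tr}\,\tilde R-\mathrm{tr}\,\tilde R_{(k)}|\le 1/|\mathrm{Im}\,z|$ allows the replacement $\tilde R_{(k)}\rightsquigarrow \tilde R$. Coupling this with $F^{\Sigma_p}\to F^{\Sigma}$ and $n/p\to c$ produces, in the limit, a fixed-point equation for $\tilde m$; substituting $\tilde m=m/c-(c-1)/(cz)$ and clearing denominators puts it in the advertised form $-1/m=z-c\int t/(1+tm)\,dF^{\Sigma}(t)$.

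The most delicate point is uniqueness of the solution of this fixed-point equation in the class of Stieltjes transforms of probability measures on $[0,\infty)$, since this is what unambiguously identifies the limit law. I would follow Silverstein \cite{silverstein1995strong}: rewrite the equation as a self-map of $\mathbb C^+$ and use a Schwarz--Pick-type contraction argument exploiting $\mathrm{Im}\,m(z)>0$ whenever $\mathrm{Im}\,z>0$ to force two candidate solutions to coincide on a neighbourhood and hence everywhere by analytic continuation. Combined with the almost-sure pointwise convergence of $m_p$ on a dense subset of $\mathbb C^+$ established above and standard tightness of $\{F^{W_p}\}$, this upgrades to almost-sure weak convergence $F^{W_p}\Rightarrow F^W$, with $F^W$ identified as the probability measure whose Stieltjes transform is the unique solution of the equation.
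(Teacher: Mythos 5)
First, a point of comparison: the paper does not prove this theorem at all --- it is quoted as a known result with references to Mar\v{c}enko--Pastur, Yin--Bai--Krishnaiah, Bai--Yin and Silverstein --- so there is no internal proof to measure you against. Your resolvent route (companion matrix, Sherman--Morrison leave-one-out, quadratic-form concentration, the rank-one bound $|\mathrm{tr}\,\tilde R-\mathrm{tr}\,\tilde R_{(k)}|\le 1/|\mathrm{Im}\,z|$, Silverstein's uniqueness argument, tightness plus a.s.\ convergence on a dense set) is exactly the canonical proof of this statement, and that skeleton is sound. Two steps, however, do not work as written.

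(i) The decomposition $\tilde W_n=\frac{1}{n}\sum_{k=1}^{p}\sigma_k^2 N_{\cdot k}N_{\cdot k}^T$ with $\tilde R_{(k)}$ independent of $N_{\cdot k}$ presupposes that $\Sigma_p$ is diagonal. The theorem allows an arbitrary nonnegative definite symmetric random $\Sigma_p$ and only two moments on the entries, so you cannot reduce to the diagonal case by rotating: writing $\Sigma_p=U\Lambda U^T$, the columns of $N_pU$ are neither independent nor composed of i.i.d.\ entries unless $N_p$ is Gaussian, and then both the independence claim and the Hanson--Wright/Burkholder concentration for $\gamma_k$ fail. Silverstein's actual argument avoids this by decomposing over the i.i.d.\ sample direction, $W_p=\frac{1}{n}\sum_{j=1}^{n}\Sigma_p^{1/2}r_jr_j^T\Sigma_p^{1/2}$ with $r_j$ the rows of $N_p$, so that $\Sigma_p$ is absorbed into the deterministic matrix of the quadratic form $r_j^T\Sigma_p^{1/2}R_{(j)}\Sigma_p^{1/2}r_j$; you need that version (or restrict to diagonal $\Sigma_p$/Gaussian data, which is all this paper actually uses). (ii) The last step is not mere ``clearing denominators.'' Your limiting relation is, correctly, $z=-1/\tilde m+\frac{1}{c}\int \frac{t}{1+t\tilde m}\,dF^{\Sigma}(t)$ for the companion transform $\tilde m$, and substituting $\tilde m=m/c-(c-1)/(cz)$ does not produce the displayed equation $-1/m=z-c\int \frac{t}{1+tm}\,dF^{\Sigma}(t)$. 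Indeed the displayed equation, expanded at $z\to\infty$, forces $\int x\,dF^W=c\int t\,dF^{\Sigma}$, whereas $\frac{1}{p}\mathrm{tr}\,W_p\to\int t\,dF^{\Sigma}$; and in the Mar\v{c}enko--Pastur case $F^{\Sigma}=\delta_1$ the two quadratics for $m$ agree only when $c=1$. What the display actually is, is the Silverstein equation for the dual (companion) transform with the ratio $p/n$ in place of $c=\lim n/p$. So either state your result for $\tilde m$ (noting the convention mismatch in the statement), or carry out the substitution explicitly and present the resulting equation for $m$; asserting that it ``puts it in the advertised form'' conceals a genuine discrepancy rather than closing the proof.
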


So, recall the topic of this chapter is reconstructing the ground truth spectrum given only the spectrum of the sample covariance. Currently most methods for this problem are "free-probability based". That is they attempt to  solve the equation \ref{eqn:sample spectrum} to get an estimator of the true spectrum $F^{\Sigma}$. Take the sample covariance spectrum as if it would be the limit, which is taking $F^{W_p}$ for the distribution $F^W$ in order to
get the Stieltjes transform $m(z)$. So, instead of $m(z)$, we use $m_p(z)$ in \ref{eqn:sample spectrum}.
Then solve, which is finding $F^\Sigma$ solving \ref{eqn:sample spectrum} and  pretending it is $F^{\Sigma_p}$. In such an approach one hopes that there is only a little difference between the observed spectral distribution for a given $p$ and the limiting distributions. Such an approach based on free probability theory was pioneered by EI Karoui \cite{el2008spectrum}, and then Bai etc. \cite{bai2010estimation}, and recently by Ledoit and Wolf \cite{ledoit2012nonlinear}and \cite{ledoit2015spectrum}. It's not surprising that as dimensions grow, consistency is achieved by the free probability approach. But a disadvantage is that the recovered spectrum is still far from the true spectrum for small or moderate size of $p$ since the method operates as if the data given would already be in the "free-probability limit". Another problem with the free probability approach is that the spectrum of the ground truth covariance needs to converge for free probability to be applicable. However, in real data, there is always different order eigenvalues: some have order $O(1)$ and some have order $O(p)$.
 
In\cite{amsalu2018recovery}, Matzinger etc. proposes 2 methods to reconstruct population spectrum based on sample spectrum. The first one is a simple algebraic formula to reconstruct population eigenvalues given sample eigenvalues. It is our formula \ref{useful_approx}. Unless the structure of population spectrum is too flat, this estimation performs well. Note that our formula can be interpreted as: the relative error between sample spectrum and ground truth spectrum is approximately the Stieltjes transform of sample spectrum. Indeed, our approximation \ref{useful_approx} can be rewritten:
 \begin{equation}
 \label{stiljes_matzi}
 c\cdot \frac{\hat{\sigma}_i^2-\sigma_i^2}{\sigma^2_i}\approx
 -\frac{1}{p}\sum_{j\neq i}
 \frac{\hat{\sigma}_{j}^2}
 {\hat{\sigma}_{j}^2-\hat{\sigma}_i^2}=-1-\frac{\hat{\sigma}^2_i}{p}\sum_{j\neq i}
 \frac{1}
 {\hat{\sigma}_{j}^2-\hat{\sigma}_i^2}=-1 -\hat{\sigma}^2_i \int  \frac{1}{x-\hat{\sigma}^2_i}dF^{W_p}(x),
 \end{equation}
 where for the integral over the spectral measure $F^{W_p}$ we make the convention that we leave out the atom at $\hat{\sigma}^2_i$ since otherwise we would have $0$ in the denominator of the summation. (Also, as usual we have $n=c\cdot p$). Note that on the very right side of \ref{stiljes_matzi}, we have the Stieltjes transform of the empirical distribution of the sample covariance spectrum. The second method proposed by Matzinger etc.\cite{amsalu2018recovery} is a fixed point method. The second approach is more computation-expensive but achieves a more accurate estimate, and we won't treat it here. Our research focus on the first approach proposed by  Matzinger etc all \cite{amsalu2018recovery}. We do a deeper analysis on the error term and theoretically show it is negligible under certain condition. We also present a similar formula \ref{blue} but where sample covariance and covariance spectrum are inverted.
 
In most situations, researchers use the spectrum extracted from a sample matrix, especially the sample covariance matrix, which brings in some error due to sample estimation bias. Therefore, estimating the eigenvalues of a population covariance matrix from a sample covariance matrix is of fundamental importance. The population spectrum will provide us more accurate essential information about the structure of the data problem\cite{burda2004signal}.

\section{Related Work}
Both of eigenvalues and eigenvectors have significant influence in mathematics and real life. Theoretically, they can be applied in linear algebra, differential operators and dynamic equations such as matrix diagonalization, eigen decomposition, eigenvector-eigenvalue identity and solving differential equations. Apart from mathematics, researchers also utilize the properties of eigenvalue and eigenvector in Schrödinger equation\cite{cooley1961improved}, geology\cite{graham2000graphical} and vibration analysis. The widely known application would be Principal Component Analysis(PCA)\cite{jolliffe1986principal}, which is used in dimension reduction\cite{kambhatla1997dimension}, feature selection\cite{malhi2004pca,song2010feature,guo2002feature}, K-means clustering\cite{ding2004k} and general text classification problems\cite{gomez2012pca,zahedi2013improving,uuguz2011two,nedungadi2014high}.

In most situations, researchers use the spectrum extracted from a sample matrix, especially the sample covariance matrix, which brings in some error due to sample estimation bias. Therefore, estimating the eigenvalues of a population covariance matrix from a sample covariance matrix is of fundamental importance. The population spectrum will provide us more accurate essential information about the structure of the data. There are a family of researches on spectrum reconstruction algorithms, which attempt to discretize and adapt the free probability infinite dimensional recovery. The idea was first introduced by \cite{el2008spectrum} based on the Marčenko–Pastur equation. In\cite{yin1983limiting}, authors show that the spectral distribution of a central multivariate matrix converges to a a limit distribution in probability. Then in\cite{bai1988convergence} the convergence of the spectral distribution of the sample covariance matrix to the semicircle law was proved given the assumption that $X_{p} = [X_{ij}]_{p*n}$ has iid entries and $E(X_{11}^{4})<\infty, var(X_{11})=1$. A similar result on strong convergence of the empirical distribution of eigenvalues was proved in 1995 by \cite{silverstein1995strong}. Recently, in\cite{ledoit2012nonlinear}\cite{ledoit2015spectrum} authors propose a novel estimate of the population eigenvalues which is consistent under large-dimensional asymptotics regardless of whether or not they are clustered, and that also performs well in finite sample. They find the estimate by solving the following optimization problem:
$$\hat(\tau_{n}) = \argmin_{t\in [0,\infty)^{p}}\frac{1}{p}\sum_{i=1}^{p}[q_{n,p}^{i}(t)-\lambda_{n,i}]^{2}$$
where $Q_{n,p}(t) = (q_{n,p}^{1}(t),\dots,q_{n,p}^{p}(t))^t$ is the nonrandom QuEST function. And this convergence is almost surely convergence. In\cite{li2013estimation} researchers show another new method founded on a meaningful generalization of the seminal Marcenko-Pastur equation, originally defined in the complex plan, to the real line.

Shrinkage is also one of methods to reconstruct population spectrum. The idea was pioneered by Stein\cite{stein1975estimation}. See also Bickel\cite{bickel2008regularized} and Donoho\cite{gavish2017optimal}. Another type of approach is based on the moments of the spectral distributions\cite{kong2017spectrum}, which shows a theoretically optimal and computationally efficient algorithm for recovering the moments of the population eigenvalues. Finally, there are also Physicists Burda, Gorlich and Jarosz, working on this problem\cite{burda2004signal}. 

\section{Case of Larger Order Eigenvalues}
\label{section_larger_order}
In this subsection we show the approximation formula \ref{blue} to hold when the condition \ref{condition_sample_size} is satisfied. So we first write down a three dimensional vector but the formula will still be useful in high dimension case. Now we have a sequence of i.i.d. vectors with $0$ expectation:
$$\vec{X},\vec{X}_1,\vec{X}_2,\ldots,\vec{X}_n$$
where $\vec{X}_i=(X_i,Y_i,Z_i)$ and
$\vec{X}=(X,Y,Z)$

We will assume that
\begin{eqnarray*}
E[\vec{X}]\\
&=&E[(X,Y,Z)]\\
&=&E[\vec{X}_i]\\
&=&(E[X_i],E[Y_i],E[Z_i])\\
&=&(0,0,0)
\end{eqnarray*}
We will explain later why in many applications, this assumption is realistic. We assume that $X_i$, $Y_i$ and $Z_i$ are independent of each other. Hence, the covariance matrix is given by
\begin{align*}
\Cov[\vec{X}]=
\left[
\begin{array}{ccccccccc}
\sigma_X^2&0&0\\
0&\sigma_Y^2&0\\
0&0&\sigma_Z^2
\end{array}
\right]=
\left[
\begin{array}{ccccccccc}
E[X^2]&E[XY]&E[XZ]\\
E[YX]&E[Y^2]&E[YZ]\\
E[ZX]&E[ZY]&E[Z^2]
\end{array}
\right]
\end{align*}

Now recall the Central Limit Theorem: Assume we have variables $W_1,W_2,\ldots$, which are i.i.d, then we have for $n$ large enough, the properly re-scaled sum is approximately standard normal:
$$\frac{W_1+W_2+\ldots+W_n-nE[W_1]}{\sqrt{n}\sigma}\approx \mathcal{N}(0,1)$$
the goal is to figure out how precise our estimates for the eigenvalues and eigenvectors are. Since the expectation is $0$, in our estimate of the covariance matrix we can leave the part which estimates the expectation out. Then we use the following estimate for the covariance matrix:
$$\hat{\Cov}[\vec{X}]=
\left[
\begin{array}{ccccccccc}
\frac{X_1^2+\ldots+X_n^2}{n}&\frac{X_1Y_1+\ldots+X_nY_n}{n}&
\frac{X_1Z_1+\ldots+X_nZ_n}{n}\\
\frac{Y_1X_1+\ldots+Y_nX_n}{n}&\frac{Y_1^2+\ldots+Y_n^2}{n}&
\frac{Y_1Z_1+\ldots+Y_nZ_n}{n}\\
\frac{Z_1X_1+\ldots+Z_nX_n}{n}&\frac{Z_1Y_1+\ldots+Z_nY_n}{n}&
\frac{Z_1^2+\ldots+Z_n^2}{n}\\
\end{array}
\right]$$

We can now apply the Central Limit Theorem to all entries
of the estimated covariance matrix above. For example let's take $W_i$ to be $W_i=X_i Y_i$. Then
\begin{align}\label{lalube}
\frac{X_1Y_1+\ldots+X_nY_n}{n}-E[X_1Y_1]
=& \frac{1}{\sqrt{n}}\frac{W_1+W_2+\ldots+W_n-E[W_1]}{\sqrt{n}}\nonumber\\
&\approx \sigma_{W_1}\frac{\mathcal{N}(0,1)}{\sqrt{n}}\nonumber \\
&=\frac{\sigma_{X_1}\sigma_{Y_1}}{\sqrt{n}}\mathcal{N}(0,1)
\end{align}
So take the difference $E$ between the estimated covariance matrix
and the real one, being called the covariance estimation matrix:
\begin{align*}
E&=\hat{\Cov}[\vec{X}]-\Cov[\vec{X}]\\
&=\left[
\begin{array}{ccccccccc}
\frac{X_1^2+\ldots+X_n^2}{n}\;\;\;\;\;-E[X^2]&\frac{X_1Y_1+\ldots+X_nY_n}{n}-E[XY]&
\frac{X_1Z_1+\ldots+X_nZ_n}{n}-E[XZ]\\
\frac{Y_1X_1+\ldots+Y_nX_n}{n}-E[YX]&\frac{Y_1^2+\ldots+Y_n^2}{n}\;\;\;\;\;\;-E[Y^2]&
\frac{Y_1Z_1+\ldots+Y_nZ_n}{n}\;\;-E[YZ]\\
\frac{Z_1X_1+\ldots+Z_nX_n}{n}-E[ZX]&\frac{Z_1Y_1+\ldots+Z_nY_n}{n}\;\;-
E[ZY]
&
\frac{Z_1^2+\ldots+Z_n^2}{n}\;\;\;\;\;\;\;\;-E[Z^2]\\
\end{array}
\right]
\end{align*}

With the Central Limit Theorem applied to each of the entries of the last matrix above in the same way as in \ref{lalube}.Now, let $N_{ij}$ be the re-scaled $i,j$-th entry of our covariance estimation error matrix. Hence, $$N_{12}=\frac{\sqrt{n}E_{12}}{\sigma_X\sigma_Y},N_{13}=\frac{\sqrt{n}E_{13}}{\sigma_X\sigma_Z},N_{23}=\frac{\sqrt{n}E_{23}}{\sigma_Y\sigma_Z}$$
and
$$N_{11}=\frac{\sqrt{n}E_{11}}{\sigma^{2}_{X}},N_{22}=\frac{\sqrt{n}E_{22}}{\sigma^{2}_{Y}},N_{33}=\frac{\sqrt{n}E_{33}}{\sigma^{2}_{Z}},$$
whilst $N_{ij}=N_{ji}$.  By definition, the term $N_{ij}$ has expectation $0$ and standard deviation $1$.

Clearly as $n$ goes to $\infty$, the $N_{ij}$ is asymptotically standard normal.
With this notation:
\begin{equation}
\label{Covapprox}\hat{\Cov}[\vec{X}]-\Cov[\vec{X}]=
\frac{1}{\sqrt{n}}
\left[
\begin{array}{ccccccccc}
\sigma^{2}_{X}N_{11}&\sigma_X\sigma_YN_{12}&
\sigma_X\sigma_ZN_{13}\\
\sigma_Y\sigma_XN_{21}&\sigma^{2}_{Y}N_{22}&
\sigma_{Y}\sigma_{Z}N_{23}\\
\sigma_Z\sigma_XN_{31}&\sigma_Z\sigma_YN_{32}&
\sigma^{2}_{Z}N_{33}
\end{array}
\right]
\end{equation}
Also, note that the terms $N_{11},N_{22},N_{33},N_{12},N_{13},N_{23}$ are all pairwise uncorrelated. For example:
\begin{align*}
\Cov(XY,XZ)\\
&=E[XYXZ]-E[XY]\cdot E[XZ] \\
&=E[X^2]E[Y]E[Z]-E[X]E[Y]E[X]E[Z] \\
&=0
\end{align*}
Hence,
\begin{align*}
&\Cov\left(\frac{X_1Y_1+\ldots+X_nY_n}{n},\frac{X_1Z_1+\ldots+X_nZ_n}{n}
\right)\\
&=\frac{1}{n^2}\sum_{i,j}\Cov(X_iY_i,X_jZ_j)=
\frac{1}{n^2}\sum_{i}\Cov(X_iY_i,X_iZ_i)\\
&=0
\end{align*}

Next we are going to establish the formula for the estimated eigenvalue and eigenvectors of the covariance matrix. Again, the estimated eigenvalues and eigenvectors are simply the eigenvectors and eigenvalues of the estimated covariance matrix. We assume $\sigma_X$, $\sigma_Y$ and $\sigma_z$ all have different values. Let $A$ denote the covariance matrix, $E$ again the error-matrix, which is the difference between the estimated and the true covariance matrix. Let $\vec{\mu}=(1,0,0)^T$ be the first eigenvector of $A=\Cov[\vec{X}]$. Let $\lambda=\sigma_X^2$ denote the first eigenvalue of the covariance matrix $A$ and let $\lambda+\Delta\lambda$ denote the first eigenvalue of the estimated covariance matrix.

So the estimated covariance matrix is $A+E$, hence the true covariance matrix plus a ``perturbation'' $E$. Let $\vec{v}=\vec{\mu}+\Delta\vec{\mu}$ be the first eigenvector for the estimated covariance matrix and assume that $\Delta\vec{\mu}$ is orthogonal to $\mu$. Hence $\Delta\vec{\mu}=(0,\Delta\mu_Y,\Delta\mu_Z)^T$. With these notations, we have:
\begin{equation}
\label{A+E}
(A+E)(\vec{\mu}+\Delta\vec{\mu})
=(\lambda+\Delta\lambda)(\vec{\mu}+\Delta\vec{\mu}).
\end{equation}
Also, since $\vec{\mu}$ is an eigenvector of $A$,
we have:
\begin{equation}
\label{Amu}
A\vec{\mu}=\lambda\vec{\mu}
\end{equation}
Subtracting equation \ref{A+E} from \ref{Amu},
we find:
\begin{equation}
\label{noise}
(A-I\lambda)\Delta\vec{\mu}=-E\vec{\mu}+\Delta\lambda\vec{\mu}+
-E\Delta\vec{\mu}+\Delta\lambda\Delta\vec{\mu}.
\end{equation}

we find the following exact equation:
\begin{align*}
\label{zaza3}
&\left[
\begin{array}{ccccccccc}
0&0&0\\
0&\sigma_Y^2-\sigma_X^2-\Delta\lambda&0\\
0&0&\sigma_Z^2-\sigma_X^2-\Delta\lambda
\end{array}
\right]
\left[\begin{array}{c}
0\\
\Delta\mu_Y\\
\Delta\mu_Z
\end{array}\right]\\
&=
\frac{-1}{\sqrt{n}}
\left[
\begin{array}{c}
\sigma^{2}_{X}N_{11}\\
\sigma_X\sigma_Y N_{21}\\
\sigma_X\sigma_Z N_{31}
\end{array}
\right]+
\left[
\begin{array}{c}
\Delta\lambda\\
0\\
0\\
\end{array}
\right]\\
&-
\frac{1}{\sqrt{n}}\left[
\begin{array}{ccccccccc}
0&0&0\\
0&\sigma^{2}_{Y} N_{22}&\sigma_Y\sigma_Z N_{23}\\
0&\sigma_Y\sigma_Z N_{32}&\sigma^{2}_{Z}N_{33}
\end{array}\right]
\left[
\begin{array}{c}
0\\
\Delta\mu_Y\\
\Delta\mu_Z
\end{array}
\right]\\
&-\frac{1}{\sqrt{n}}\left[
\begin{array}{ccccccccc}
0&\sigma_X\sigma_Y N_{12}&\sigma_X\sigma_Z N_{13}\\
0&0&0\\
0&0&0
\end{array}\right]
\left[
\begin{array}{c}
0\\
\Delta\mu_Y\\
\Delta\mu_Z
\end{array}
\right]
\end{align*}
the above equation for matrices can be separated into two parts. First the single equation for $\Delta\lambda$:
\begin{equation}
\label{Deltalambda}
\Delta\lambda=\frac{1}{\sqrt{n}}\sigma^{2}_{X} N_{11}
+\frac{\sigma_X}{\sqrt{n}}(\sigma_Y N_{12}\Delta\mu_Y+
\sigma_Z N_{13}\Delta\mu_Z),
\end{equation}
which we will use to determine $\Delta\lambda$. Then the $p-1$ dimensional equation for $\Delta\vec{\mu}$ given as follows:
\begin{align*}&\left[
\begin{array}{ccccccccc}
\sigma_Y^2-\sigma_X^2-\Delta\lambda&0\\
0&\sigma_Z^2-\sigma_X^2-\Delta\lambda
\end{array}
\right]
\left[\begin{array}{c}
\Delta\mu_Y\\
\Delta\mu_Z
\end{array}\right]\\
&=
\frac{-1}{\sqrt{n}}
\left[
\begin{array}{c}
\sigma_X\sigma_Y N_{21}\\
\sigma_X\sigma_Z N_{31}
\end{array}
\right]-
\frac{1}{\sqrt{n}}\left[
\begin{array}{cc}
\sigma_{Y^2}N_{22}&\sigma_Y\sigma_Z N_{23}\\
\sigma_Y\sigma_Z N_{32}&\sigma_{Z^2} N_{33}
\end{array}\right]
\left[
\begin{array}{c}
\Delta\mu_Y\\
\Delta\mu_Z
\end{array}
\right]
\end{align*}

If $\Delta\lambda$ is given, we can solve the above equation for $\Delta\vec{\mu}=(\Delta\mu_Y,\Delta\mu_Z)$ to find:
\begin{align*}
&\Delta\vec{\mu}=\left[
\begin{array}{c}
\Delta\mu_Y\\
\Delta\mu_Z
\end{array}\right]\\
&=
\left(I-
\frac{-1}{\sqrt{n}}\left[
\begin{array}{cc}
\frac{1}{\sigma_Y^2-\sigma_X^2-\Delta\lambda}&0\\
0&\frac{1}{\sigma_Z^2-\sigma_X^2-\Delta\lambda}
\end{array}
\right]
\cdot
\left[
\begin{array}{cc}
\sigma_{Y^2} N_{22}&\sigma_Y\sigma_Z N_{23}\\
\sigma_Y\sigma_Z N_{32}&\sigma_{Z^2} N_{33}
\end{array}\right]\right)^{-1}\\
&\cdot
\frac{-1}{\sqrt{n}}
\left[
\begin{array}{c}
\frac{\sigma_X\sigma_Y}{\sigma_Y^2-\sigma_X^2-\Delta\lambda} N_{21}\\
\frac{\sigma_X\sigma_Z}{\sigma_Z^2-\sigma_X^2-\Delta\lambda} N_{31}
\end{array}
\right]
\end{align*}
where $I$ is the identity matrix. Now, let $D_1$ be the matrix
\begin{align}
D_1:=-\left[
\begin{array}{cc}
\frac{1}{\sigma_Y^2-\sigma_X^2-\Delta\lambda}&0\\
0&\frac{1}{\sigma_Z^2-\sigma_X^2-\Delta\lambda}
\end{array}
\right]
\end{align}
and let $E_1$ be the matrix obtained from $E$ by deleting the first column and the first row:
\begin{align}
E_1=
\left[
\begin{array}{cc}
E_{22}&E_{23}\\
E_{32}&E_{33}
\end{array}\right]=\frac{1}{\sqrt{n}}
\left[
\begin{array}{cc}
\sigma^{2}_{Y}N_{22}&\sigma_Y\sigma_Z N_{23}\\
\sigma_Y\sigma_Z N_{32}&\sigma^{2}_{Z} N_{33}
\end{array}\right]
\end{align}

Therefore, we have
\begin{align}
\Delta\vec{\mu}=
\left[
\begin{array}{c}
\Delta\mu_Y\\
\Delta\mu_Z
\end{array}\right]= -\frac{1}{\sqrt{n}}(I-D_1E_1)^{-1}
\left[
\begin{array}{c}
\frac{\sigma_X\sigma_Y}{\sigma_Y^2-\sigma_X^2-\Delta\lambda}N_{21}\\
\frac{\sigma_X\sigma_Z}{\sigma_Z^2-\sigma_X^2-\Delta\lambda}N_{31}
\end{array}
\right]
\end{align}
Now when the spectral norm of $D_1E_1$ is less than $1$, then we get the formula:
$$(I-D_1E_1)^{-1}=I+D_1E_1+(D_1E_1)^2+(D_1E_1)^3+\ldots$$
In the case where $D_1E_1$ has spectral norm quite a bit less than $1$, we can approximate $(I-D_1E_1)^{-1}$  by $I$ and find that:
\begin{align*}
\Delta\vec{\mu}=
\left[
\begin{array}{c}
\Delta\mu_Y\\
\Delta\mu_Z
\end{array}\right]
\approx -\frac{1}{\sqrt{n}}
\left[
\begin{array}{c}
\frac{\sigma_X\sigma_Y}{\sigma_Y^2-\sigma_X^2-\Delta\lambda}\hat{N}_{21}\\
\frac{\sigma_X\sigma_Z}{\sigma_Z^2-\sigma_X^2-\Delta\lambda}\hat{N}_{31}
\end{array}
\right]
\end{align*}
with the relative error in that approximation being less than $\frac{|D_1E_1|}{1-|D_1E_1|}$. We can now plug the above formula into \ref{Deltalambda} and get:
\begin{equation}
\label{babal}
\Delta\lambda\approx \frac{1}{\sqrt{n}}\sigma^{2}_{X} N_{11}
-\frac{\sigma^{2}_X}{\sqrt{n}}(\frac{\sigma_Y^2 N_{12}^2}{\sigma_Y^2-\sigma_X^2-\Delta\lambda}+
\frac{\sigma_Z^2 N_{13}^2}{\sigma_Z^2-\sigma_X^2-\Delta\lambda}),
\end{equation}
If we don't take a three dimensional vector $\vec{X}=(X,Y,Z)$ but instead a $p$-dimensional $\vec{X}=(X_1,X_2,\ldots,X_p)$ with independent normal entries, we find that the formula \ref{babal} becomes
\begin{equation}
\label{babal2}
\Delta\lambda= \frac{1}{\sqrt{n}}\sigma^{2}_{X_1} N_{11}
- \frac{\sigma _{X_1}^2}{\sqrt{n}}\left(\frac{\sigma_{X_2}^2 N_{12}^2}{\sigma_{X_2}^2-\sigma_{X_1}^2-\Delta\lambda}+
\frac{\sigma_{X_3}^2 N_{13}^2}{\sigma_{X_3}^2-\sigma_{X_1}^2-\Delta\lambda}+
\ldots+
\frac{\sigma_{X_p}^2 N_{1p}^2}{\sigma_{X_p}^2-\sigma_{X_1}^2-\Delta\lambda}\right)
\end{equation}
where $$N_{1j}=\sqrt{n}\frac{E_{1j}}{\sigma_{X_1}\sigma_{X_j}}$$

Also, here $\sigma_{X_1}^2+\Delta\lambda$ represents the eigenvalue of the sample covariance, to which we compare
the first eigenvalue $\sigma_1^2$ of the true covariance. Note that we did not use the order of eigenvalues,
hence $\sigma_1^2$ could be any eigenvalue of the true covariance.
Also, for our formula \ref{babal2} we don't need the fact that the ground truth eigenvalues are ordered to be held. So, to simplify notation let us denote by $\sigma^2_j$ the variance $VAR[X_j]=\sigma^2_{X_j}$ which is also the $j$-the eigenvalue of the covariance matrix
$COV[\vec{X}]$. Now, $\sigma_{X_1}^2+\Delta\lambda$ can denote any eigenvalue of the sample covariance matrix. So, let
us write $$\hat{\sigma}^2_1>\hat{\sigma}^2_2>\ldots>\hat{\sigma}^2_p$$
for the eigenvalues of the sample covariance.
Let $i^*$ be the index of the sample covariance eigenvalue to which we compare the eigenvalue $\sigma^2_i$.
Hence, with this generalisation \ref{babal2} becomes:
\begin{equation}
\label{Deltalambda_2}
\Delta\lambda_i=\sigma^2_{i^*} -\sigma^2_{i}
\end{equation}
and \ref{babal2} can be rewritten as
\begin{equation}
\label{i^*}
\Delta\lambda_i\approx-
\frac{\sigma _i^2}{\sqrt{n}}\sum_{j\neq i}\frac{\sigma_{j}^2 N_{ij}^2}{\sigma_{j}^2-\hat{\sigma}^2_{i^*}},
\end{equation}
where we left out the term  $\frac{1}{\sqrt{n}}\sigma^{2}_{X_1} N_{11}$ which is a smaller order term.
Now, for the above \ref{i^*} to be useful, we need
 \begin{equation}
 \label{difference}
 \sigma_{j}^2-\hat{\sigma}^2_{i^*}
 \end{equation}
not to be too small.
Indeed if for example  the sample eigenvalue $\hat{\sigma}^2_{i^*}$ is equal to one of the ground truth eigenvalues $\sigma^2_j$ with $j\neq i$,
then we would have zero in the denominator of one of the terms in the sum in \ref{i^*}. The only way to control this is to take $i^*$ to be the index
of the sample eigenvalue which comes closest to $\sigma^2_i$. In this way, we guarantee that in our sum \ref{i^*}, the expression \ref{difference} does not get beneath $0.5$ times the spectral gap number $i$ of the ground truth.
The spectral gap is defined as follows:
\begin{equation}
\label{spectral_gap}
{\tt spectral\_ gap}_i:=min\{\sigma^2_{i-1}-\sigma^2_i,\sigma^2_{i}-\sigma^2_{i+1}\}
\end{equation}
and so we take $i^*$ to be defined as:
\begin{equation}
\label{deffi*}
i^*:=\argmin_j(j\mapsto|\sigma_i^2-\hat{\sigma}^2_j|).
	\end{equation}
The other reason for taking such $i^*$ is that the matrix $D_1$ defined before may also explode due to \ref{difference} being uncontrolled small. Now, defining $\Delta\lambda_i$ using $i^*$ leads t a meaningless formula in the $O(1)$ part of the spectrum, which has $O(p)$ eigenvalues. So their distances should be about $O(1/p)$. Then, if we choose to compare $\sigma^2_i$ with the sample eigenvalues that come closest, we get $\Delta\lambda_i$ to be meaningless: the difference of sample spectrum and ground truth is $O(1)$ in that area. However, since the eigenvalues build a continuum, their distances are infinitesimal, which means we get
$\Delta\lambda_i$ defined in \ref{Deltalambda} to be infinitesimal and not $O(1)$. So, the current section is for an error of the spectrum larger than $O(p^{0.5})$. (Compare with the remark at the very  end of this section). The case of the eigenvalues of order $O(1)$ is treated in the next section.
	
	Now, we assume that the ground truth eigenvalues are spaced very regularly. So that
	$$\sigma^2_i-\sigma^2_{i+1},\sigma^2_{i}-\sigma^2_{i+2},\sigma^2_{i}-\sigma^2_{i+3},\ldots$$
	behave like the sequence
	$\Delta_i,2\cdot\Delta_i,3\cdot \Delta_i,\ldots$, where $\Delta_i>0$ is the spectral gap defined in \ref{spectral_gap}. We assume the same thing to be held
	on the left side of $i$.  Note that we have
	the series
	$$\frac{1}{\Delta_i}+\frac{1}{2\Delta_i}+\frac{1}{3\Delta_i}+\ldots=\infty$$
is divergent. This implies that terms
$$\frac{\sigma_{i}^2}{\sigma_{i}^2-\hat{\sigma}^2_{i^*}},\frac{\sigma_{i+1}^2}{\sigma_{i+1}^2-\hat{\sigma}^2_{i^*}},\frac{\sigma_{i+2}^2}{\sigma_{i+2}^2-\hat{\sigma}^2_{i^*}},\ldots$$
behave like the terms of a  divergent series. Hence none of its terms dominates the sum
$$\sum_{j\geq i}\frac{\sigma_{i}^2}{\sigma_{j}^2-\hat{\sigma}^2_{i^*}}$$
This has a practical importance for the expression on the right of approximation \ref{i^*}: the expected value
dominates the fluctuation. This means that we can replace the standard normal random variables square $\mathcal{N}^2_{ij}$  by their expectation $1$
and this causes only a smaller order change. The reason is as follows:
consider a sum
\begin{equation}
\label{aN}
\sum_ja_jN_j^2
\end{equation}
where $N_j^2$'s are independent standard normals squared and the $a_j$'s are constants.
Then the expectation of \ref{aN} is $\sum_ja_j$ and dominates the sums standard deviation
as soon as the sum $\sum_j a_j$ dominates any of its terms $a_j$ Take now $a_j$ to be
$\frac{\sigma_{j}^2}{\sigma_{j}^2-\hat{\sigma}^2_{i^*}}$ . The condition that none of the $a_j$ dominates
the sum is satisfied due to the series being divergent, We act also as if $\hat{\sigma}^2_{i^*}$
would not be random. Hence, in the sum \ref{i^*} we can replace the standard normal square $N_{Ij}^2$
by their expected value $1$ and this will only cause a smaller order change. Hence, given the condition\ref{condition_sample_size}, we finally obtain our result by replacing in \ref{i^*} $\mathcal{N}^2_{ij}$ by $1$
to  get the approximation formula:

\begin{equation}
\label{blue}
\boxed{{\color{blue}\Delta\lambda_i\approx
		-\frac{\sigma_i^2}{n}\cdot \sum_{j\neq i}^p
		\frac{\sigma^2_j}{\sigma_j^2-\hat{\sigma}_{i^*}^2}  }}.
\end{equation}
So, this is our result. To prove it, we used the approximation
\begin{equation}\label{thisequation}I\approx(I-D_1E_1)^{-1}\end{equation}
so, we can write
\begin{align}
\label{ID}
(I-D_1E_1)^{-1} =& D_1^{-0.5}\left(I-D_1^{0.5}E_1D^{0.5}\right)^{-1}D_1^{0.5}\nonumber\\
=& D_1^{-0.5}(I+D_1^{0.5}E_1D_1^{0.5}+(D_1^{0.5}E_1D_1^{0.5})^2+\ldots)D_1^{0.5}
\end{align}
where $D_1^{0.5}$ designates the square root of the matrix $D_1$ obtained
by taking all the eigenvalues and replacing them by their square root.
We also use the geometric series development for the last equation above:
$$(I-D_1^{0.5}E_1D^{0.5})^{-1}=I+D_1^{0.5}E_1D_1^{0.5}+(D_1^{0.5}E_1D_1^{0.5})^2+\ldots$$
which is valid as soon as $D_1^{0.5}E_1D_1^{0.5}$ has all eigenvalues strictly smaller than
$1$ in absolute value. If all these eigenvalues have their absolute values much smaller than
$1$, then we can use the approximation:
$$I\approx I+D_1^{0.5}E_1D_1^{0.5}+(D_1^{0.5}E_1D_1^{0.5})^2+\ldots$$
replacing the expression on the right side of the equation above by $I$ this into the right side of \ref{ID}, we find:
\begin{align*}
&D_1^{-0.5}(I+D_1^{0.5}E_1D_1^{0.5}+(D_1^{0.5}E_1D_1^{0.5})^2+\ldots)D_1^{0.5}\\
&\approx D_1^{-0.5}\cdot I\cdot D_1^{0.5}\\
&=I
\end{align*}
which with the help of \ref{ID} leads
$$(I-D_1E_1)^{-1}\approx I$$
and our \ref{thisequation}. So, this is the last thing remaining to be proven in order to establish \ref{blue}.
Again, we need $D_1^{0.5}E_1D_1^{0.5}$ to have spectral norm close to zero, which is the same
as looking at the spectral norm of $|D_1|^{0.5}E_1|D_1|^{0.5}$,
where the matrix $|D_1|$is obtained from $D_1$ by replacing the eigenvalues by their absolute values.

Again $E_1$ is the matrix obtained from
$$E=\hat{\Cov}[\vec{X}]-\Cov[\vec{X}]$$
by deleting the first row and column. Similarly we take the diagonal matrix with $j$-th entry equal to $\sigma_j^2/(\sigma_{j}^2-\hat{\sigma}_{i^*2}i^2)$ and then delete the first row and column to obtain $D_1$ from the finite dimensional approximation. However, we do not attempt to bound $|D_1E_1|$ in what follows. Rather, we work with bounding the spectral norm of the matrix $|D_1|^{\frac{1}{2}}\cdot E_1\cdot |D_1|^{\frac{1}{2}}$. Let us go back to the three dimensional case, which is good to visualise what is going on:
\begin{align*}
&|D_1|^{\frac{1}{2}}E_1 |D_1|^{\frac{1}{2}}\\
&=\frac{1}{\sqrt{n}}\left[
\begin{array}{cc}
\frac{1}{\sqrt{|\sigma_Y^2-\sigma_X^2-\Delta\lambda|}}&0\\
0&\frac{1}{\sqrt{|\sigma_Z^2-\sigma_X^2-\Delta\lambda|}}
\end{array}
\right]\cdot
\left[
\begin{array}{cc}
\sigma_{Y^2}N_{22}&\sigma_Y\sigma_Z N_{23}\\
\sigma_Y\sigma_Z N_{32}&\sigma_{Z^2} N_{33}
\end{array}\right]\\
&\cdot
\left[
\begin{array}{cc}
\frac{1}{\sqrt{|\sigma_Y^2-\sigma_X^2-\Delta\lambda|}}&0\\
0&\frac{1}{\sqrt{|\sigma_Z^2-\sigma_X^2-\Delta\lambda|}}
\end{array}
\right]\\
&=
\frac{1}{\sqrt{n}}\left[
\begin{array}{cc}
\frac{\sigma_Y}{\sqrt{|\sigma_Y^2-\sigma_X^2-\Delta\lambda|}}&0\\
0&\frac{\sigma_Z}{\sqrt{|\sigma_Z^2-\sigma_X^2-\Delta\lambda|}}
\end{array}
\right]\cdot
\left[
\begin{array}{cc}
N_{22}&N_{23}\\
N_{32}& N_{33}
\end{array}\right]\\
&\cdot\left[
\begin{array}{cc}
\frac{\sigma_Y}{\sqrt{|\sigma_Y^2-\sigma_X^2-\Delta\lambda|}}&0\\
0&\frac{\sigma_Z}{\sqrt{|\sigma_Z^2-\sigma_X^2-\Delta\lambda|}}
\end{array}\right]
\end{align*}

Recall that $E$ is the error matrix when estimating the covariance matrix:
$$E=\hat{\Cov}[\vec{X}]-\Cov[\vec{X}]$$
and $E_1$ is obtained from $E$ by deleting the first row and columns from $E$.
So, we have $E_1$ is equal to
\begin{equation}
\label{matrix_formula}
E_1=
\frac{1}{\sqrt{n}}\left[
\begin{array}{cc}
\sigma_Y&0\\
0&\sigma_Z
\end{array}
\right]\cdot
\left[
\begin{array}{cc}
N_{22}&N_{23}\\
N_{32}& N_{33}
\end{array}\right]
\cdot\left[
\begin{array}{cc}
\sigma_Y&0\\
0&\sigma_Z
\end{array}\right]
\end{equation}

We note the similarity between the formula for the covariance matrix estimation error given in \ref{matrix_formula} and the formula , for
\begin{equation}
\label{DED}
|D_1|^{\frac{1}{2}}E_1 |D_1|^{\frac{1}{2}}.
\end{equation}
This shows that the matrix \ref{DED} can be interpreted as a covariance estimation matrix, but with the eigenvalues
of the covariance not being to
$$\frac{\sigma_Y}{\sqrt{|\sigma_Y^2-\sigma_X^2-\Delta\lambda|}}$$
and
$$\frac{\sigma_Z}{\sqrt{|\sigma_Z^2-\sigma_X^2-\Delta\lambda|}}$$
Now, let us go back to the $p$ dimensional case. Similarly, we get that matrix \ref{DED} is the covariance estimation error matrix,
when the ground truth eigenvalues are:
$$\frac{\sigma^2_1}{(\sigma^2_1-\hat{\sigma}^2_{i^*})^2},\frac{\sigma^2_2}{(\sigma^2_2-\hat{\sigma}^2_{i^*})^2},\ldots,
\frac{\sigma^2_{i-1}}{(\sigma^2_{i-1}-\hat{\sigma}^2_{i^*})^2},\frac{\sigma^2_{i+1}}{(\sigma^2_{i+1}-\hat{\sigma}^2_{i^*})^2},
\ldots,\frac{\sigma^2_p}{(\sigma^2_p-\hat{\sigma}^2_{i^*})^2}$$
	where we act as if $\hat{\sigma}^2_{i^*}$ would be non-random.

We can figure out the spectral norm of $D_i E_i$ up to a universal constant thanks to the break through result
of Koltschinksi an Klounici \cite{koltchinskii2017normal}. They show that for an estimated covariance matrix, the spectral norm of the error matrix $|E|$ is typically bounded by
\begin{equation}
\label{boundC}
|E|\leq C\cdot \left(\max_{j}\frac{\sigma_j}{\sqrt{n}}\right)\sqrt{\sum_j\sigma_j^2}
\end{equation}
where $C>0$ is a universal constant, which does not depend on $n$ or on the sequence $\sigma_1^2,\sigma_2^2,\ldots,\sigma_p^2$ of ground truth eigenvalues. They get a hard edge sharply exponential decaying property for the probability to have eigenvalues bigger than the bound\ref{boundC}.  So, we can apply the formula of koltschinksi and Lounici to
our matrix \ref{DED}, since that matrix is also covariance error matrix. For this we need to replace $\sigma_j$ by  $\frac{\sigma_j}{\sqrt{|\sigma_j^2-\hat{\sigma}_{i^*}^2|}}$ for every $j\neq i$ in our bound \ref{boundC}. This gives
us a tied bound, which typically holds for the spectral norm of $|D_i|^{0.5}E_i|D_i|^{0.5}$: So, with high probability:
$${\tt spectral\; norm\;of}\;|D_i|^{0.5}E_i|D_i|^{0.5}\leq
C\cdot\left(
\frac{1}{\sqrt{n}}\max_{j\neq i}\frac{\sigma_j}{\sqrt{|\sigma_j^2-\hat{\sigma}_{i^*}^2|}}\right)\cdot
\sqrt{\sum_{j\neq i} \frac{\sigma^2_j}{|\sigma_j^2-\hat{\sigma}_{i^*}^2|}
}$$
Noting that for fixed $i$, expression
$$\frac{\sigma_j^2}{|\sigma_j^2-\sigma_i^2|}=\frac{1}{|1-(\frac{\sigma_i^2}{\sigma_j^2})|}$$
becomes smaller as $\sigma_j^2$ moves away from $\sigma_i^2$ in both directions,
we get that the maximum is about:
$$\max_{j\neq i}\frac{\sigma_j^2}{|\sigma_j^2-\sigma_i^2|}\approx
\frac{\sigma_i^2}{{\tt spectral\;gap_i}}$$
so that we get
\begin{equation}
\label{condition4secondapprox}{\tt spectral\; norm\;of}\;|D_i|^{0.5}E_i|D_i|^{0.5}\leq
C\cdot
\frac{\sigma_i}{\sqrt{n}\sqrt{{\tt spectral \; gap}_{i^*}}}\cdot
\sqrt{\sum_{j\neq i} \frac{\sigma^2_j}{|\sigma_j^2-\hat{\sigma}_{i^*}^2|}
}
\end{equation}
So for approximation \ref{blue} to hold up to a smaller error term, we simply need the approximation \ref{thisequation}. On the other hand, for \ref{thisequation} to hold,
we need the spectral norm of $|D_i|^{0.5}E|D_i|^{0.5}$ to be close to $0$. To guarantee this, we can use condition \ref{condition4secondapprox}. So, we need the right side of \ref{condition4secondapprox} to be quite a bit below $1$. Formally we want a  small constant $\epsilon$ so that $0<\epsilon<1$ and \ref{condition4secondapprox} is less than $\epsilon$, which gives the condition on the sample size $n$:
\begin{equation}
\label{condition_sample_size}\sqrt{n}\geq C\cdot
\frac{\sigma_i}{\epsilon \sqrt{{\tt spectral \; gap}_{i^*}}}\cdot
\sqrt{\sum_{j\neq i} \frac{\sigma^2_j}{|\sigma_j^2-\hat{\sigma}_{i^*}^2|}
}
\end{equation}
This typically will hold, for the eigenvalues of order
bigger or equal to $O(p^{\frac{1}{2}})$ assuming the eigenvalues $\sigma^2_j$ to be regularly spaced.

So, we assume standardized data, which means $\sum_j\sigma^2_j=p$.
Hence, if a certain type of eigenvalues has a sum less than $O(p)$,
they would be not relevant. Hence if we consider eigenvalues of order $O(p^\beta)$ with $0<\beta<1$, then there needs to be $O(p^{1-p})$ of them at least, since otherwise their sum would be too small
to play an important role. If they are spaced regularly, then the order of the spectral gap must be $O(p^\beta/p^{1-\beta})=O(p^{2\beta-1})$.
Now, with enough regularity of the eigenvalues, the expression on the right side of \ref{condition_sample_size} is approximately equal to $\frac{\sigma_i^2}{{\tt spectral \; gap}_i}$. Then plugging in the formula $O(p^{2\beta-1})$ for the spectral gap and $O(p^\beta)$ for   $\sigma_i^2$ into \ref{condition_sample_size}, we get that condition
that \ref{condition_sample_size} is satisfied when $\beta>0.5$

\section{The Case of large $c$ for the Sample Size $n = c\cdot p$}
\label{section_large_constant}
The current section is for proof of an approximation formula
for the difference between the spectrum of sample covariance and
 ground truth covariance in the case that the constant $c$
is very large. For this we assume as usual a data matrix
$X$ of dimension $n\times p$, where $n=c\cdot p$ with i.i.d.
normal rows with expectation $0$. Then,
we let $p$ go to infinity. Our approximation formula
is supposed to hold, for large $c$. Again,
let
$$COV[\vec{X}]=\Sigma_p:=E[X^tX]$$ denote the $p\times p$ ground truth covariance matrix, which is also denoted by $\Sigma_p$. We denote by $$\hat{COV}[\vec{X}]=\hat{\Sigma}_p=\frac{X^t\cdot X}{n}$$ the sample covariance matrix.
Again, recall that we denote by $\hat{\sigma}_j^2$ the $j$'th eigenvalue of the sample covariance and by $\sigma_j^2$ the $j$-th eigenvalue of the ground truth covariance. In previous cases, we ordered the eigenvalues in decreasing order. The goal of this section, is to show that the approximation
\begin{equation}
\label{approximation}
\hat{\sigma}_i^2-\sigma^2_i\approx  \frac{\sigma^2_i}{n}\sum_{s\notin J^k_i}\frac{\hat{\sigma}^2_s}{\hat{\sigma}^2_s-\hat{\sigma}^2_i},
\end{equation}
holds given $c$ is sufficiently large. The interval $J^k_i=[i-k,i+k]$ is defined so that the sum
approximate the improper integral that is so that:
\begin{equation}
\label{park}
\frac{1}{p}\sum_{s\notin J^k_i}\frac{\hat{\sigma}^2_s}{\hat{\sigma}^2_s-\hat{\sigma}^2_i}\approx \int \frac{x}{x-\hat{\sigma}_i^2}dF^W(x).
\end{equation}
(Note that for i.i.d variables $Y_1,Y_2,\ldots,Y_p$ with a density $f_Y$, in order to have the approximation
$$\frac{1}{p}\sum_s \frac{Y_s}{Y_s-z}\approx \int  \frac{x}{x-z} f_y(x)dx$$
we need to remove a few of the $Y$'s closest to $z$.....)
Here are some detailed explanations. Again, $c$ is constant. The idea is that if we take $c$ really large, but then keep it constant whilst $p$ goes to infinity. $\hat{\sigma}_i^2-\sigma^2_i$ scales like $1/c$.
So, we define $a$ to be:
$$a:=c\cdot (\hat{\sigma}^2_i-\sigma_i^2)$$
We view $a$ as a function of $\sigma^2_i$ or equivalently as a function
of $\hat{\sigma}^2_i$. For $c$ large enough, $a$ should not change a lot and we view it in terms
of $c$ as a constant, which depends on eigenvalue we choose.
Therefore, the goal of this section is to show that for $c$ large enough, $a$ equals the left side of the approximation \ref{park}
up to a small order term,
which would then imply \ref{approximation}.
Instead we are going to prove that $a$ is the right side of \ref{approximation} plus a term $O(\frac{1}{C})$ at the limit after
$p$ goes to $\infty$ holding $c$ fixed\ref{main_result}.

Now we assume that we have the data matrix $X$, which is $n$ times $p$. For the matrix $X$, it has the property that all the columns and rows are independent normal random variables with expectation $0$. More specifically, we assume that there is a normal random vector of length $p$ with independent entries
$$\vec{X}=(X_1,X_2,\ldots,X_p)$$
where
$E(X_j)=0$ for $j=1,2,\ldots,p$ and $X_1,X_2,\ldots,X_p$ are independent.
We assume that they are independent because if we would have data with dependent columns, we could just
change coordinate system and work with principal components and so get independent coordinates.
We assume that $\Var(X_j)=\sigma_j^2$.
Hence, the covariance matrix $\Cov(\vec{X})$ is a diagonal matrix
\begin{equation}
\label{COV}
\Cov(\vec{X})=
\left[\begin{array}{ccccccccc}
\sigma_1^2&0&0&\ldots&0\\
0&\sigma^2_2&0&\ldots&0\\
0&0&\sigma^2_3&\ldots&0\\
\ldots\\
0&0&0&\ldots&\sigma^2_p
\end{array}\right]
\end{equation}
Again, we have the $n\times p$ data matrix $X$:
\begin{align}
X=
\left[\begin{array}{ccccccccc}
X_{11}&X_{12}&X_{13}&\ldots&X_{1p}\\
X_{21}&X_{22}&X_{23}&\ldots&X_{2p}\\
X_{31}&X_{32}&X_{33}&\ldots&X_{3p}\\
\vdots & \vdots & \vdots &\ldots & \vdots\\
X_{n1}&X_{n2}&X_{n3}&\ldots&X_{np}\\
\end{array}\right]
\end{align}
So, the rows of $X$ are each distributed like $\vec{X}$ and independent of each other.
Since $E[\vec{X}]=\vec{0}$, the estimated covariance matrix (sample covariance)
is given by
$$ \hat{\Cov}(\vec{X}):=\frac{X^T X}{n} . $$
Now, we are going to look at the data $X$ without the $i$-th column.
More exactly, we are going to replace the $i$-th column by zeros,
and then compute the sample covariance matrix. This estimated covariance matrix will
be denoted by $\hat{\Cov}(\vec{X})_{SUB}$.
So, we have $X_{SUB}$ is defined by
\begin{align*}
X_{SUB}:=
\left[\begin{array}{ccccccccc}
X_{11}&X_{12}&X_{13}&\ldots&X_{1(i-1)}&0&X_{1(i+1)}&\ldots&X_{1p}\\
X_{21}&X_{22}&X_{23}&\ldots&X_{2(i-1)}&0&X_{2(i+1)}&\ldots&X_{2p}\\
X_{31}&X_{32}&X_{33}&\ldots&X_{3(i-1)}&0&X_{3(i+1)}&\ldots&X_{3p}\\
\vdots & \vdots & \vdots &\ldots & \vdots & 0 & \vdots & \ldots &\vdots\\
X_{n1}&X_{n2}&X_{n3}&\ldots&X_{n(i-1)}&0&X_{n(i+1)}&\ldots&X_{np}\\
\end{array}\right]
\end{align*}
Hence the sample covariance matrix for this
"reduced" data matrix
is given by:
\begin{equation}
\label{reduce_cov}
\hat{\Cov}(\vec{X})_{SUB}=\frac{X_{SUB}^T X_{SUB}}{n}.
\end{equation}
The above estimated covariance matrix has the $i$-th row and $i$-th column
being $0$. Other entries are clearly the same as for the full sample
covariance $X^T X/n$.
Now, one eigenvalue of the reduced sample covariance \ref{reduce_cov} is equal to $0$.
Others are denoted by	
\begin{equation}
\label{old}
\hat{\sigma}^2_{SUB,1}>\hat{\sigma}^2_{SUB,2}>\hat{\sigma}^2_{SUB,3}>\ldots>\hat{\sigma}^2_{SUB,(p-1)}.
\end{equation}
The eigenvalues of the original sample covariance $\hat{\Cov}[\vec{X}]$ are denoted by:
\begin{equation}
\label{new}
\hat{\sigma}^2_1>\hat{\sigma}^2_2>\ldots>\hat{\sigma}^2_p.
\end{equation}
In the lemma \ref{interlacing_lemma}, we show the interlacing property. That is
we always have:
$$\hat{\sigma}^2_1\geq \hat{\sigma}^2_{SUB,1}\geq
\hat{\sigma}^2_2\geq \hat{\sigma}^2_{SUB,2}\geq \ldots\geq
\hat{\sigma}^2_{p-1}\geq \hat{\sigma}^2_{SUB,(p-1)}\geq \hat{\sigma}^2_p
$$
Now, we are going to condition on the data without column $i$, namely condition on $X_{SUB}$. Then, the eigenvalues
$$\hat{\sigma}^2_{SUB,1}>\hat{\sigma}^2_{SUB,2}>\hat{\sigma}^2_{SUB,3}>\ldots>\hat{\sigma}^2_{SUB,(p-1)}.$$
are no longer random. When we add the random $i$-th column to the matrix $X_{SUB}$, new eigenvalues of the full sample covariance matrix, that is
$\hat{\sigma}^2_1>\hat{\sigma}^2_2>\ldots>\hat{\sigma}^2_p$,
become random. We are going to study the evolution of "this particle process".
That is how we get the eigenvalues \ref{new} from \ref{old}.
Now, we denote by $\nu_j$ the eigenvalue $\hat{\sigma}^2_{SUB,j}$,  for all $j=1,2,\ldots,p-1$.
Again, we assume that the spectrum of the ground truth covariance $\Sigma_p=COV[\vec{X}]$ converges to
a limit with distribution function denoted by $F^{\Sigma}$ as $p$ goes to infinity. Also, the empirical distribution of the sample covariance matrix \ref{COV} is denoted by $F^{W_p}$,
whilst the spectrum of the restricted  sample covaraince $\hat{COV}[\vec{X}]_{SUB}$ is denoted by $F^{W_{p-1}}$, where we leave out $0$. We assume $F^{\Sigma_p}$ converges and so $F^{W_p}$ must also converge to a limit $F^W$, so called
Whishard distribution. One can, for example, determine eigenvalues for $\Sigma_p$ by choosing at random i.i.d. from
the distribution $F^{\Sigma}$, which means that we could have that
$\sigma^2_1>\sigma^2_2>\ldots>\sigma^2_p$
as a set obtained by choosing $p$ i.i.d. values from the distribution $F^\Sigma$.
Or one could choose in a more regular way to get faster convergence of $F^{\Sigma_p}$. Now, in our notation $\sigma^2_i$ is the one we leave out. When we add $\sigma^2_i$ to the ground truth spectrum,
we go from the empirical distribution $F^{W_{p-1}}$ to $F^{W_p}$.
Since we have convergence of $F^{W_p}$ to $F^W$, we need
\begin{equation}
\label{distribution_function}
G^p:=p\cdot F^{W_p}-(p-1)\cdot F^{W_(p-1)}
\end{equation} to converge
weakly to
$F^W$, at least when the added eigenvalue $\sigma_i^2$ is chosen at random
from the distribution $F^\Sigma$.

At this stage we are ready to summarize the rest about how we show that our approximation \ref{approximation} holds, for large $c$. Let's look at a few examples first.

EXAMPLE 1:
Assume for example $p=7$, and that we have the spectrum of the restricted covariance $\hat{\Cov}(\vec{X})_{SUB}$
given by:
$$\{\nu_1=7,\nu_2=6,\nu_3=5,\nu_4=4,\nu_5=3,\nu_6=2\}
$$
whilst the full sample covariance 's  $\hat{Cov}(\vec{X})$ spectrum would be:
$$\{\hat{\sigma}^2_1=7,\hat{\sigma}^2_2=6,\hat{\sigma}^2_3=5,{\color{red}\hat{\sigma}^2_4=4.5},\hat{\sigma}^2_5=4,\hat{\sigma}^2_6=3,\hat{\sigma}^2_7=2\}.$$
we see that the difference consists in one point, which has been added. We will denote that point by $\xi$, so in the current example,
we find $\xi=4.5$. In reality it is unlikely that only one points gets added. So, let us look at a more realistic example.

EXAMPLE 2: Again $p=7$ and let spectrum of $\hat{\Cov}(\vec{X})_{SUB}$
be as before ,but the spectrum of the ground truth be changed to:
$$\{\hat{\sigma}^2_1=7,\hat{\sigma}^2_2=6,{\color{red}\hat{\sigma}^2_3=5.5,\hat{\sigma}^2_4=4.5,\hat{\sigma}^2_5=3.5},\hat{\sigma}^2_6=3,\hat{\sigma}^2_7=2\}.$$
In this case, eigenvalues $\nu_1,\nu_2,\nu_5,\nu_6$ are not changed, but all the others are. So we could not view the change as adding one single point. However, we will still do so by viewing the point added $\xi$ to be a random variable with a density function, which is zero outside the interval $[3.5,5.5]$ and centered maybe, in the current case, at $4.5$.  Indeed in that interval the total number of points get increased by one when you go from restricted sample covariance matrix spectrum to full sample covariance.
There are two approaches presented in our research. One is heuristic and maybe easier to understand. It first shows when we take $C>0$ really large, we get a situation like the one presented in the current example: most eigenvalues barely change when we add the additional dimension to go from $\hat{\Cov}(\vec{X})_{SUB}$ to $\hat{\Cov}(\vec{X})$. And the most serious change happens in a restricted interval, which is centered in a certain location.
That location could be viewed as the place where we added a point. The heuristic argument
is then to say that if the additional eigenvalue $\sigma^2_i$ is the $i$-th eigenvalue of the ground truth spectrum, then this should also add a "point" in the $i$-th position of the sample covariance.
If we assume this to be true, one explains in Section \ref{addenda}, that this translates into our formula \ref{approximation} holding up to a small error term.

Now, the approach we pursue in the rest of this Section is obtained by writing down the equation for the distribution of $\xi$. Let us see one more example:

EXAMPLE 3: Take the same restricted spectrum as before, but let the spectrum of the full sample covariance be:
$${\color{red}\{\hat{\sigma}^2_1=7.1,\hat{\sigma}^2_2=6.1,\hat{\sigma}^2_3=5.5,\hat{\sigma}^2_4=4.5,\hat{\sigma}^2_5=3.5,\hat{\sigma}^2_6=2.9,\hat{\sigma}^2_7=1.9\}}$$
So, this time all the eigenvalues are changed a little bit. However, those further from center are changed much less. So, how could we model this as one point $\xi$ added to the spectrum? The answer is that we take the ratio of how much they get moved
to the spectral gap as the probability distribution function. For example, we see, in current example, that between $\nu_2$ and $\hat{\sigma}^2_2$, there is only a distance of $0.1$. So we will assume that the random variable $\xi$, which represents the change in spectrum as one point random variable added, would have a probability of $0.1$ to be to the left of  $\nu_2$.
In other words, we model the probability of $\xi$ by the ratio:
\begin{equation}
	\label{basic}
	P(\xi\leq \nu_j)=E\left[\;\frac{\hat{\sigma}^2_j-\nu_j}{\nu_{j-1}-\nu_j}\;\right]
	\end{equation}
or we should probably take the expectation on the right side of the equation above. If we take the distribution function $G^p$ as defined in \ref{distribution_function}, then at the limit we should get $F^W$. So a microscopic moving average of $G^p$ should converge to $F^W$ as well. Recall that we had defined $a$ to be
$$a=c\cdot(\hat{\sigma}^2_i-\sigma^2_i)$$
The goal is to determine $a$ at the limit when $p$ goes to infinity.
The way to calculate $a$ is as follows. At the limit we know that $\xi$ must have limit distribution $F^W$. Recall that we denote by $\sigma^2_i$ the eigenvalue of the ground truth
covariance matrix. Now, we can add a value chosen at random among
$\sigma_1^2,\sigma_2^2,\ldots,\sigma^{2}_{p-1}$.
In this way, we get a random variable $T$ with probability distribution $F^{\Sigma_{p-1}}$. Then, $\hat{\sigma}^2_i$ is a random variable with distribution $F^{W_{p}}$, which we denote
by $S$ and we get
$$\sigma^2_i=T=S-\frac{a(S)}{c}$$

In order to calculate the value of $a$, what we do in the remainder of this section
is simple: since $\xi$ and $S$ are supposed to have the same probability distribution $F^W$ at the limit,
we write the equation:
\begin{equation}
\label{ref}
P(S\leq x_0)=\int P(\xi\leq x_0|S=s)dF^W(s).
\end{equation}
which is held for every $x_0$. Now, this is one equation and we have one unknown $a$. So we can solve for $a$ given a formula for the conditional probability in the integral on the right side of \ref{ref}. This formula is obtained from an exact formula \ref{boulanger} and \ref{boulanger2} for $\hat{\sigma}^2_j-\sigma^2_i$.  This leads to the approximation \ref{boulanger3}, which holds up to a small order term. And we can rewrite the approximation as:
\begin{equation}
\label{BASIC}
\frac{c\cdot(\hat{\sigma}^2_j-\sigma^2_i+\frac{\sigma^2_i}{c}
\sum_{s\notin J^K_j} \frac{\nu_s}{\nu_s-\hat{\sigma}^2_j})}{\nu_j\cdot \sigma^2_i}\cdot \frac{p}{\nu_{j-1}-\nu_j}\approx
(\nu_{j-1}-\nu_j)\cdot
\sum_{s\notin J^K_j} \frac{\nu_s\mathcal{N}^2_{s}}{\nu_s-\hat{\sigma}^2_j}
\end{equation}
where $\mathcal{N}_1,\mathcal{N}_2,\ldots,\mathcal{N}_{p-1}$ are conditioned
on $\nu_1,\ldots,\nu_{p-1}$ i.i.d. standard normal. Also, the interval $J^K_j=[j-K,j+K]$ is to leave out enough uncontrolled small term in the sum on the right
of \ref{BASIC}, so as to get the sum to be close to the corresponding indefinite integral.

Similar equation to \ref{boulanger3} is given in \cite{amsalu2018recovery}. However,
the novelty of our research is that we understood this equation and \ref{BASIC} is not to determine the macroscopical difference
between sample spectrum and ground truth. Rather it is to determine the evolution of sample viewed as particle process as we add one additional dimension to the data each time step and observe the resulting evolution. Indeed, by the interlacing property we know that
\begin{equation}
\label{interval}
\hat{\sigma}^2_j\in[\nu_{j-1},\nu_j]
\end{equation}
so conditioning on $\nu_1,\nu_2,\ldots,\nu_{p-1}$ the macroscopical position of $\hat{\sigma}^2_j$ is no longer to be determined.
It is its microscopical relative position within the interval on the right of \ref{interval},
which equation \ref{BASIC} allows to determine. By relative microscopical position we mean:
the ratio \begin{equation}
\label{ratio}
\frac{\hat{\sigma}^2_j-\nu_j}{\nu_{j-1}-\nu_j}
\end{equation}
Now note that we can solve equation \ref{BASIC} to determine the value of $\hat{\sigma}^2_j$ inside the interval \ref{interval}.
Also, note that the left side  of \ref{BASIC} is not affected by the exact position of $\hat{\sigma}^2_j$ inside that interval
except for a small order term. Hence, the value of the ratio \ref{ratio} can be viewed as the value of a function $g(.)$
of the left side of \ref{BASIC}. The same thing holds when we take the expectation:
\begin{equation}
\label{expected_ratio}
g\left(\frac{c\cdot(\hat{\sigma}^2_j-\sigma^2_i+\frac{\sigma^2_i}{c}
	\sum_{s\notin J^K_j} \frac{\nu_s}{\nu_s-\hat{\sigma}^2_j})}{\nu_j\cdot \sigma^2_i\cdot f^W(\nu_j)}\right)=
E\left[\frac{\hat{\sigma}^2_j-\nu_j}{\nu_{j-1}-\nu_j}|S\right],
\end{equation}
where we replaced $p/(\nu_{j-1}-\nu_j)$ by the probability density $f^W$ at the limit. We assume that microscopically the adjacent spectral gaps have a joint distribution, which
asymptotically does not depend on location or scale once re-scaled by $\nu_j-\nu_{j-1}$. We know that the conditional probability for $\xi$ is less than $x_0$, $P(\xi\leq x_0|S)$, is given by the expected ratio on the right side of \ref{expected_ratio} according to \ref{basic}. We can thus replace the conditional probability inside the integral on the right side of \ref{ref} by the expression on the left of \ref{expected_ratio}. We would then put $\sigma_i^2=S-a(S)/c$ and put both $\nu_j$ and $\sigma^2_j$ equal to $x_0$
in the expression on the left of \ref{expected_ratio} and solve. This would work if we could determine the function $g(.)$. Alternatively, for large $c$ we do not need to know everything about $g(.)$. Instead, it is enough to know for large $z$ how $g(.)$ behaves. In the present case, we argue that $g(x)\approx 1/|z|$ as long as $z$ is larger in absolute value than a certain constant.

So, this is the method how to determine $a$: we take equation \ref{ref} after plug in the formula given for the conditional probability by \ref{expected_ratio} and solve for $a$.

Next we are going to discuss the detail of it. it turns out that for calculation it is easier to do in two steps: first calculate the change in probability when going from $S$ to $T=S-a(S)/c$, then the change in probability from $T$ to $\xi$. Let us first give one more numerical example, where we can study in details:

EXAMPLE 4: We are dealing with a signed measure.
Assume that $p=6$. And, the  eigenvalues are given as follows:
	$$
	\begin{array}{c|c|c}
	j&\nu_j&\hat{\sigma}^2_j\\\hline
	1&1&0.9\\
	2&2&1.9\\
	3&3&2.5\\
	4&4&3.3\\
	5&5&4.2\\
	6&&5.2\\
	\end{array}$$
	
	So, note that $G^p$ takes the following values:
	$$
	\begin{array}{c||c|c|c|c|c|c|c|c}
	G^p(x)&0&1&0&1&0&1&0&1\\\hline
	x\in&[-\infty,0.9)&[0.9,1)&[1,1.9)&[1.9,2)&[2,2.5)&[2.5,3)&[3,3.3)&[3.3,4)\\
	\end{array}
	$$
	$$
	\begin{array}{c||c|c|c|c}
	G^p(x)&0&1&0&1\\\hline
	x\in&[4.4.2)&[4.2,5]&[5,5.2)&[5.2,\infty]\\
	\end{array}
	$$
	
	First note that due to the interlacing property, we have
	$$\hat{\sigma}^2_1<\nu_1<\hat{\sigma}^2_2<\nu_2\leq\ldots\leq \nu_5\leq \hat{\sigma}^2_6$$
	which implies that the function $G^p$ is alternating between values $0$ and $1$.
		\begin{figure}\centering
     \label{spectrum_comp}
     \includegraphics[width=0.47\columnwidth]{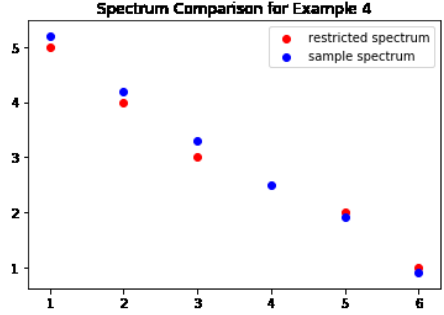}
     \caption{\small Spectrum Comparison for Example 4}
     \end{figure}
	The probability distribution function of a random point is increasing and can not be alternatively
	going up and down like $G^p$ does. When we consider our numerical example, we see that
	$\nu_1$ and $\hat{\sigma}^2_1$ are close to each other. Similarly, in our current example,
	\begin{equation}
	\nu_1\approx \hat{\sigma}^2_1,\nu_2\approx \hat{\sigma}^2_2,\nu_4\approx \hat{\sigma}^2_5,\nu_5\approx \hat{\sigma}^2_6
		\end{equation}
	So, in a very rough approximation we could say that going from spectrum
	\begin{equation}
	\label{spectrum1}
	\{\nu_1,\nu_2,\nu_3,\nu_4,\nu_5\}
	\end{equation}
	to the spectrum
	\begin{equation}
	\label{spec trum2}
	\{\hat{\sigma}^2_1,\hat{\sigma}^2_2,\hat{\sigma}^2_3,\hat{\sigma}^2_4,\hat{\sigma}^2_4,\hat{\sigma}^2_5,\hat{\sigma}^2_6\}
	\end{equation}
	we "add a point in the area $[\nu_2,\nu_4]$". Now, in the interval $[\nu_1,\nu_2]$ the function $G^p$ is 0 on an sub-interval of length $0.9$ and $1$ on a sub-interval of length
	$0.1$. So, on average it is $0.1$ on that interval. Same thing for the interval $[\nu_2,\nu_3]$. In our example, we can write
	\begin{equation}
	\label{01}
	0.1=\frac{\hat{\sigma}^2-\nu_2}{\nu_1-\nu_2}.
	\end{equation}
	so, if we would do a local smoothing, that is a moving average of $G^p$. The value between $\nu_1$ and $\nu_3$ would probably be close to $0.1$. The moving average happens if we re-simulate the situation many times and then take the average.
	Now, when we take a moving average of $G^p$, we would get those values. Again in reality we are interested in a case with very
	large $p$. So instead of $z\mapsto G^p(z)$, we take the map $z\mapsto E[G^p(z)]$, we get a local moving average since we consider values $\nu_1,\nu_2,\ldots,\nu_p$ to be random. But by concentration of measure they fluctuate only microscopically. Hence the moving average will only be microscopical instead of macroscopical. When instead of simulating the data $X$ once,
	we simulate it many times and then build the average of the function $z\mapsto G^p(z)$. For every simulation we get one realisation of $G^p(z)$. That is for every
	$z\in \mathbb{R}$, we get a long term average value for $G^p(z)$ denoted by
	$E[G^p(z)]$. In our example, for $z$ in  $[\nu_1,\nu_2]$,  $E[G^p(z)]$ would probably be close to $0.1$.
	Of course, we need larger $p$ for this work well.
	
So here $z$ is non-random. The formula for
	the value of $E[G^p(z)]$ at $z=\nu_j$ should thus be given by the formula:
	\begin{equation}
	\label{O}
	E[G^p(\nu_j)]=\frac{E[\hat{\sigma}^2_j-\nu_{j}]}{E[\nu_{j-1}-\nu_j]}
	\end{equation}
	Let $f^{W_p}$ denote the probability density of the spectrum of the sample covariance. We average over the distribution function $F^{W_p}$, take the derivative and consider
	$$f^{W_p}(z)=\frac{dE[F^{W_p}(z)]}{dz}.$$
	We can express the expected distance between eigenvalues in function of
	the density function $f^{W_p}$:
	\begin{equation}
	\label{density}
	E[\nu_{j-1}-\nu_j]\approx \frac{1}{p\cdot f^W_p(\nu_j)}.
	\end{equation}
	Recall that by interlacing property, we have that $\hat{\sigma}^2_j$ is in
	$[\nu_{j-1},\nu_{j}]$. The exact location of $\hat{\sigma}^2_j$ is determined by
	an equation. This  equation including the unknown $y$,
	in a slightly simplified form, can be written as:
	\begin{equation}
	\label{y}
	c\cdot \frac{h_j}{\nu_j\cdot \sigma^2_i}=-\frac{1}{p}\left(\frac{1}{\nu_{j-1}-y}+\frac{1}{\nu_{j}-y}\right).
\end{equation}
under the constrain $y\in[\nu_{j-1},\nu_j]$.
The value  $h_j$ is given and we will look at it later. Now assume that $h_j>0$. Then the solution $y$ must be on the right half of the interval
$[\nu_{j-1},\nu_j]$. Assume the length of the interval to be $1/p$, then in that case,
the term $-\frac{1}{p}\left(\frac{1}{\nu_{j-1}-y}\right)$ is at most $0.5$ in absolute value.
So, this leads to the solution of \ref{y} to satisfy
\begin{equation}
\label{nu_j}
y-\nu_j\approx \frac{\nu_j\cdot \sigma^2_i}{p\cdot h_j\cdot c},
\end{equation}
We can now combine  \ref{nu_j}, \ref{density} and \ref{O} to obtain:
\begin{equation}
\label{approximationEG}E[G^p(\nu_j)]\approx\frac{\nu_j\cdot f^{W_p}\cdot \sigma_i^2}{c\cdot h_j}.
\end{equation}
Again, recall that we take the sample size to be equal to $n=c\cdot p$,
where $p$ is the dimension of the space. We  take $c$ very, very large but
it is a fixed constant, whilst  $p$ goes to infinity.
The formula for $h_j$ is given as:
\begin{equation}
\label{hj}
h_j:= \hat{\sigma}^2_j-\sigma_i^2+\frac{1}{c}\cdot \frac{\sigma_i^2}{p}\sum_{s\notin J^K_j}\frac{\nu_s}{\nu_s-\hat{\sigma}^2_j},
\end{equation}
where $J^K_j$ is the integer interval $[j-K,j+K-1]$ and $K$ is a constant of order $O(1)$.
The formula \ref{y} with our choice of $h_j$ given in \ref{hj} is obtained from \ref{boulanger3}, which we prove in the next subsection. We will mention more on that later. Now our goal in this subsection is to show the approximation:
 \begin{equation}
 \label{useful_approx}
 \hat{\sigma}^2_i-\sigma^2_i\approx-\frac{\sigma^2_i}{n}\left(\sum_{s\notin J^K_i}\frac{\nu_s}{\nu_s-\hat{\sigma}^2_i}\right)
 \end{equation}
 to hold "as well as we want" given $c$ large enough. Note that $\hat{\sigma}^2_i-\sigma^2_i$ is going to be of order $O(\frac{1}{C})$.
 So we multiply the left side of \ref{useful_approx} by $1/c$. We define $a$, the re-scaled difference:
 $$a:=c\cdot \left(\hat{\sigma}^2_i-\sigma^2_i\right)$$

 So, to say that the approximation \ref{useful_approx} holds as good as we want given $c$ large enough, would mean
 that the difference between left and right side of \ref{useful_approx} is behaving like $o(\frac{1}{C})$, where $a$ is a $O(1)$ constant different from zero. So, we can treat $a$ like a constant, which is only minimally affected by $c$, but it depends on $\sigma^2_i$.
 
Now, we can also view $j\mapsto h_j$ as a function of $\nu_j$ instead of a function
of the index $j$. This is done by putting
$$h(\nu_j):=h_j.$$

So, let us recapitulate: we add one additional dimension column to the data-matrix $X_{SUB}$. This means that we add one additional eigenvalue $\sigma^2_i$ to the ground truth covariance. The change in spectral distribution due to adding this one dimension is given by the distribution function $G^p$ defined in \ref{distribution_function}. The total value of $G_p$ , which is $G^p[(-\infty,\infty)]=1$. $G^p$ represents a signed measure with positive part having norm $p$ and the negative part having
norm $p-1$. Now, $z\mapsto G^p(z)$ is not yet the distribution function of a random variable, since it is not increasing (it represents how all the points in spectrum get change). But we would like to view the change in spectral measure as one point added, which means instead
of $G^p$ we would like to have the distribution of one random point, i.e. the distribution function of a random variable.
Then we take a local moving average of $G^p(z)$, which corresponds to taking $E[G^p(z)]$. In this way, we obtain a probability distribution function. It means that we can view the change in spectrum as if "one random point $\xi$ was added". We have an exact formula for the probability distribution of $\xi$. So our point is: there exists a symmetric function $g(.)$ around the origin so that if $x_0<b_i$ we have:
\begin{equation}
\label{dothat}
P(\xi<x_0)=E[G^p(x_0)]=g\left(\frac{c\cdot h_j}{\sigma_i^2\cdot x_0\cdot f^W(x_0)}\right).
\end{equation}
and for $x_0>b_i$ we get
$$P(\xi>x_0)=E[G^p(x_0)]=g\left(\frac{c\cdot h_j}{\sigma_i^2\cdot x_0\cdot f^W(x_0)}\right).$$
$b_i$ is the place where function $z\mapsto h(z)$ is zero.
Furthermore for a constant $K$, we have that if $|z|\geq K$, then $g(z)\approx 1/z$. Now we are going to choose the value for $\sigma^2_i$ randomly among all values of $\sigma^2_1,\sigma^2_2,\ldots,\sigma^2_p$. In this way $\hat{\sigma}_i$ is also random. Thus $\hat{\sigma}_i$ is chosen randomly among all the eigenvalues of the sample covariance. Hence, it is a random value. It will be denoted by $S$ ;
$$S:=\hat{\sigma}^2_i$$
and the random variable $S$ has distribution given by $F^{W_p}$. With this we find
$$\sigma^2_i=S-\frac{a(S)}{c}$$
which we define as the variable $T$, so that
$$T=\sigma_i^2=S-\frac{a(S)}{c}.$$
Next we look at $h_j$ given in \ref{hj} and find with our notation:
\begin{equation}
\label{hj2}
h_j=x_0-T\cdot\left(1+\frac{1}{c}\Phi(x_0)\right)
\end{equation}
where $\nu_j$ is denoted by $x_0$ and the function $\Phi(.)$ is the function
$$\Phi(z):=-\frac{1}{p}\left(\sum_{s\notin [z-\epsilon_z,z+\epsilon_z]}\frac{\nu_s}{\nu_s-z}\right)\approx -\int \frac{s}{s-z}f^{W_p}(s)ds$$
 Since now $\hat{\sigma}^2_i$ is random, namely the random variable $S$,
 when $x_0$ is to the left of the $0$ of the function $h(x_0,.)$ we can rewrite equation \ref{dothat}
 as
 \begin{equation}
 \label{dotha2}
 P(\xi<x_0|S)=g\left(\frac{c\cdot (x_0^*-T)}{T\cdot x_0^*\cdot f^W(x_0)}\right).
 \end{equation}
where we replace $h_j$ by the right side of \ref{hj2} and where
$x_0^*$ is defined by
$$x_0^*:=x_0\left(1+\frac{\Phi(x_0}{c})\right)^{-1}$$
similarly for $x_0$ to the right of the zero of $h_j$, we get
\begin{equation}
\label{dotha3}
P(\xi>x_0|T)=g\left(\frac{c\cdot (x_0^*-T)}{T\cdot x_0^*\cdot f^W(x_0)}\right).
\end{equation}
Now at the limit $S$ will have as distribution $F^W$, but the random variable $\xi$ must also have the same distribution at the limit. We can view the process going from $S$ to $\xi$ as a two step
process: first we go from $S$ to $T=S-\frac{a(S)}{c}$ Then, we go from $T$ to $\xi$.
Since $\xi$ and $S$ have same distribution, after going to the limit we have that for a fixed
non-random $x_0$, we must
have equality
$$P(S\leq x_0)=F_S(x_0)=F_\xi(x_0)=P(\xi\leq x_0)$$
when $p$ goes to infinity. So as we go from $S$ to $T$ then go from $T$ to $\xi$, the change in the probability distribution function must cancel out, which is
\begin{equation}
\label{step_zero}
F_T(x_0)-F_S(x_0)=-(F_\xi(x_0)-F_T(x_0))
\end{equation}
Now we assume that $s\mapsto a(s)$ is a continuous function.
Locally it can be considered like a constant. We also assume
it bounded. When we go over from $S$ to $S-\frac{a(S)}{C}$, then locally at $x_0$, this corresponds to a translation of the probability measure of the random variable $S$ by a small distance $\frac{a(x_0)}{C}$. Now, in a small interval of size $\Delta x$,  there is a probability mass equalling approximately the size of the small interval $\Delta x$ times the probability density in that area. So, the amount of probability mass crossing from right to left the point $x_0$ is approximately $f_S(x_0)\cdot \frac{a(x_0)}{C}$. But since the random variable $S$ at the limit has distribution
$F^w$, we get that
\begin{equation}
\label{first_step}
F_T(x_0)-F_S(x_0)\approx f^W(x_0)\frac{a(x_0)}{C}.
\end{equation}
So what is the change due to going over from the variable $T$ to $\xi$? Formula \ref{first_step} shows that there is long distance mass transportation on a scale $\frac{1}{C}$. In other words, the change $F_\xi(x_0)-F_T(x_0)$ is due to the probability mass, which is to the right of $x_0$ under the random variable $T$ and gets to the left of $x_0$ under $\xi$. Then, there is also mass leaving the interval $[-\infty,x_0]$. That is the probability mass which under $T$ is below $x_0$ and after is to the right.
In other words, we get the formula
$$F_\xi(x_0)-F_T(x_0)=\int_{x_0}^\infty   P(\xi<x_0|T=t)       f_T(t)dt-\int_{-\infty}^{x_0}    P(\xi>x_0|T=t)f_T(t)dt.$$
We want to replace the conditional probability on the right side of the last equation above using \ref{dotha2} and
\ref{dotha2}. Then we would get an equation, which is not yet quite right:
\begin{equation}
\label{expression_incorrect}
F_\xi(x_0)-F_T(x_0)=\int_{x_0}^\infty   g\left(\frac{c\cdot (x_0^*-T)}{T\cdot x_0^*\cdot f^W(x_0)}\right )
  f_T(t)dt-\int_{-\infty}^{x_0}    g\left(\frac{c\cdot (x_0^*-T)}{T\cdot x_0^*\cdot f^W(x_0)}\right)f_T(t)dt
\end{equation}
What is the problem with the above?  The problem is that our formulas \ref{dotha2} for $P(\xi\leq x_0|S)$, we need to have $0$ of the function $h(x,S)$ to be to the left of $x_0$. Otherwise, we get $1$ minus the formula. So, we would get
$$P(\xi\leq x_0|S)=1-g\left(\frac{c\cdot (x_0^*-T)}{T\cdot x_0^*\cdot f^W(x_0)}\right )$$
For $S$ and $T$ large enough, this is never going to be the case.
Recall that $h(x,S)$ is defined as
$$h(x,S)=x-(S-\frac{a(S)}{C})\cdot\left(1+\frac{1}{C}\Phi(x)\right)$$
If we set $h(x,S)=0$, it yields the equation
	\begin{equation}
	\label{formule}
	x\cdot (1+\Phi(x))^{-1}=S-\frac{a(S)}{C}
	\end{equation}
	this would yield a zero as a function of $S$: $x(S)$. Now, we want to know when
	that zero of the function $H(x,S) $ is taken at $x_0$. We simply replace
	$x$ by $x_0$ in the formula \ref{formule} and find
\begin{equation}
\label{boundary}
s_0:=x^*+\frac{a(S)}{C}
\end{equation}
So starting at $s_0$ the problem starts and goes until $s=x_0$.
Except that between $x_0$ and $x_0+\frac{a(x_0)}{c}$ the problem is not really there. Because this is the interval, where $S$ is to the right of $x_0$ but the corresponding $T$ is to the left. So, for our calculation $T$ must go to the right and not jump to the left.

In other words, in order to correct \ref{expression_incorrect}, we need to replace the function $g(.)$ by $1-g(.)$ when $S$ is in the interval $[x_0+\frac{a(x_)}{C},x^*+\frac{a(x_0)}{C}]$.
This corresponds to the interval from $x_0$ to $x^*_0$ for $T$. In other words, since formula \ref{expression_incorrect} is written with the integrator $T$ we have to replace the
 function $g(.)$ by $1-g(.)$ on the interval $[x_0,x_0^*]$. This is the same as change of the integration bound from $x_0$ to $x_0^*$ in that formula. It yields the correct formula given as:
 \begin{equation}
 \label{expression}
 F_\xi(x_0)-F_T(x_0)=\int_{x_0^*}^\infty   g\left(\frac{C\cdot (x_0^*-T)}{T\cdot x_0^*\cdot f^W(x_0)}\right )
 f_T(t)dt-\int_{-\infty}^{x_0^*}    g\left(\frac{C\cdot (x_0^*-T)}{T\cdot x_0^*\cdot f^W(x_0)}\right)f_T(t)dt
 \end{equation}

Now, we can take advantage of the symmetries of $g(.)$, and evaluate \ref{expression}:

\begin{align}
&\label{lalala}\int_{x_0^*}^\infty   g\left(\frac{C\cdot (x_0^*-T)}{T\cdot x_0^*\cdot f^W(x_0)}\right )
f_T(t)dt-\int_{-\infty}^{x_0^*}    g\left(\frac{C\cdot (x_0^*-T)}{T\cdot x_0^*\cdot f^W(x_0)}\right)f_T(t)dt\\
&=\int_{x_0^*+\frac{K^*}{C}}^\infty   g\left(\frac{C\cdot (x_0^*-T)}{T\cdot x_0^*\cdot f^W(x_0)}\right )
f_T(t)dt-\int_{-\infty}^{x_0^*-\frac{K^*}{C}}    g\left(\frac{C\cdot (x_0^*-T)}{T\cdot x_0^*\cdot f^W(x_0)}\right)f_T(t)dt\\
&+\int_{x_0^*}^{x_0+\frac{K^*}{C}}  g\left(\frac{C\cdot (x_0^*-T)}{T\cdot x_0^*\cdot f^W(x_0)}\right )
f_T(t)dt-\int_{x_0^*-\frac{K^*}{C}}^{x_0^*}    g\left(\frac{C\cdot (x_0^*-T)}{T\cdot x_0^*\cdot f^W(x_0)}\right)f_T(t)dt\\
\end{align}
where $K^*$ is a constant, which we take sufficiently large so that expression inside the function $g(.)$ is larger in absolute value than $K$ as long as
\begin{equation}
\label{notin}
T\notin[x_0^*-\frac{K^*}{C},x_0^*+\frac{K^*}{C}]
\end{equation}
We can do this because $T$ and $f^W(x_0)$ are supposed to be bounded constants.
so, in other words, we have, when \ref{notin} holds,
$$|\frac{C\cdot (x_0^*-T)}{T\cdot x_0^*\cdot f^W(x_0)}|\geq K$$
However, recall that $K$ is the constant so that for $z$ with $|z|>K$, we have approximately $g(z)=|1/z|$. Hence, when
\ref{notin} holds, we have that
$$g\left(\frac{C\cdot (x_0^*-T)}{T\cdot x_0^*\cdot f^W(x_0)}\right ) \approx \frac{T\cdot x_0^*\cdot f^W(x_0)}{C\cdot (x_0^*-T)}$$
and hence
\begin{align*}&\int_{x_0^*+\frac{K^*}{C}}^\infty   g\left(\frac{C\cdot (x_0^*-T)}{T\cdot x_0^*\cdot f^W(x_0)}\right )
f_T(t)dt-\int_{-\infty}^{x_0^*-\frac{K^*}{C}}    g\left(\frac{C\cdot (x_0^*-T)}{T\cdot x_0^*\cdot f^W(x_0)}\right)f_T(t)dt\\
 & \approx
 \int_{x_0^*+\frac{K^*}{C}}^\infty  \left|  \frac{T\cdot x_0^*\cdot f^W(x_0)}{C\cdot (x_0^*-T)}\right|
 f_T(t)dt-\int_{-\infty}^{x_0^*-\frac{K^*}{C}}  \left| \frac{T\cdot x_0^*\cdot f^W(x_0)}{C\cdot (x_0^*-T)}\right|
  f_T(t)dt\\
  &=\int_{x_0^*+\frac{K^*}{C}}^\infty  \frac{T\cdot x_0^*\cdot f^W(x_0)}{C\cdot (T-x_0^*)}
  f_T(t)dt+\int_{-\infty}^{x_0^*-\frac{K^*}{C}}  \frac{T\cdot x_0^*\cdot f^W(x_0)}{C\cdot (T-x_0^*)}
  f_T(t)dt\\
  & \approx \frac{x_0^*f^W(x_0)}{C} \int_{-\infty}^{\infty}\frac{t}{t-x_0^*}dF_T(t)
  \end{align*}
 which together with \ref{lalala} implies
 \begin{align}
 \label{lalala2}
 \int_{x_0^*}^\infty   g\left(\frac{C\cdot (x_0^*-T)}{T\cdot x_0^*\cdot f^W(x_0)}\right )
 f_T(t)dt-\int_{-\infty}^{x_0^*}    g\left(\frac{C\cdot (x_0^*-T)}{T\cdot x_0^*\cdot f^W(x_0)}\right)f_T(t)dt
 \approx\\
 \frac{x_0^*f^W(x_0)}{C} \int_{-\infty}^{\infty}\frac{t}{t-x_0^*}dF_T(t)\;\;+\;\;O(\frac{1}{C^2})
 \end{align} since
 \begin{equation}
 \label{zero}\int_{x_0^*}^{x_0+\frac{K^*}{C}}  g\left(\frac{C\cdot (x_0^*-T)}{T\cdot x_0^*\cdot f^W(x_0)}\right )
 f_T(t)dt-\int_{x_0^*-\frac{K^*}{C}}^{x_0^*}    g\left(\frac{C\cdot (x_0^*-T)}{T\cdot x_0^*\cdot f^W(x_0)}\right)f_T(t)dt=O(\frac{1}{C^2})
 \end{equation}
 due to the symmetry of $g(.)$. To see why the last order above holds, simply replace $t$ in the numerator above by
 $x_0^*$. Then by symmetry expression \ref{zero} is exactly zero.
 Combining \ref{step_zero}, \ref{first_step}, \ref{expression} and \ref{lalala2}, we find
$$a= -x_0^*\int_{-\infty}^{\infty}\frac{t}{t-x_0^*}dF_T(t)\;\;+\;\;o(\frac{1}{C})$$
 But at the limit as $p$ goes to $\infty$, we have that $T$ has the distribution of the sample spectrum at the limit. Hence, we can replace $F_T$ by $F^W$. Furthermore $x_0$ and $x_0^*$ are at a distance $o(1/C)$ from each other. So replacing $x_0^*$ by $x_0$ only creates a change of order $o(1/C)$. Hence we get
 \begin{equation}
 \label{main_result}
a=-x_0\int_{-\infty}^{\infty}\frac{t}{t-x_0}dF_T(t)\;\;+\;\;o(\frac{1}{C})
 \end{equation}
which is the main result we want to prove. Or rather, we want the approximation \ref{approximation} and instead we proved
the version at the limit after $p$ goes to $\infty$. That version at the limit should imply that the discrete version holds, for $p$ large enough.

\subsection{Derivation for Main formula about the Effect on Eigenvalues of Adding One Dimension}

Now, we are going to change coordinate system. We take the $i$-th canonical vector $\vec{e}_i$ in $\mathbb{R}^p$. And in the orthogonal complement space to $\vec{e}_i$, we take the principal components of
the restricted sample covariance matrix
$$\hat{\Cov}[\vec{X}]_{SUB}.$$
In this way, we notice that the matrix $\hat{\Cov}[\vec{X}]_{SUB}$ and $\hat{\Cov}[\vec{X}]$ are identical
except in the $i$-th column and row. Again,
$\hat{\Cov}[\vec{X}]_{SUB}.$  has its $i$-th row and column containing only $0$'s. Since we use the principal components of $\hat{\Cov}[\vec{X}]_{SUB}$ as basis, it becomes a diagonal matrix. Let us denote that matrix expressed in that basis by $A$:

So,
\begin{equation}
A=\left[\begin{array}{ccccccccc}
\nu_1&0&\ldots&0&0&0&&0&0\\
0&\nu_2&\ldots&0&0&0&&0&0\\
&&&\ldots&&&&&\\
0&0&\ldots&\nu_{i-1}&0&0&\ldots&0&0\\
0&0&\ldots&0&0&0&\ldots&0&0\\
0&0&\ldots&0&0&\nu_{i+1}&\ldots&0&0\\
&&&\ldots&&&&&\\
0&0&\ldots&0&0&0&\ldots&\nu_{p-1}&0\\
0&0&\ldots &0&0&0&\ldots&0&0
\end{array}
\right]
\end{equation}
where for simplicity of notation we denote $\hat{\sigma}^2_{SUB,j}$ by $\nu_j$ for $\forall j=1,2,\cdots,p-1$.
So we have $\nu_1>\nu_2>\ldots>\nu_p\geq 0$
Then we add a $p\times p$ perturbation matrix $E$,
which is zero everywhere except the $i$-th row and $i$-th column
to obtain the full sample covariance matrix $\hat{\Cov}[\vec{X}]$. That is let:
\begin{align*}
E:=\left[\begin{array}{ccccccccc}
0&0&\ldots&0&E_{1i}&0&\ldots&0&0\\
0&0&\ldots&0&E_{2i}&0&\ldots&0&0\\
0&0&\ldots&0&E_{3i}&0&\ldots&0&0\\
&&&&\ldots&&&&\\
E_{i1}&E_{i2}&\ldots&E_{i(i-1)}&E_{ii}&E_{i(i+1)}&\ldots&E_{i(p-1)}&E_{ip}\\
&&&&\ldots&&&&\\
0&0&\ldots&0&E_{(p-2)i}&0&\ldots&0&0\\
0&0&\ldots&0&E_{(p-1)i}&0&\ldots&0&0\\
0&0&\ldots&0&E_{pi}&0&\ldots&0&0\\
\end{array}
\right]
\end{align*}
where $E$ consists of entries of the $i$-th row and column of the matrix $\hat{\Cov}[\vec{X}]$ but expressed in the basis of the principal components of $\hat{\Cov}[\vec{X}]_{SUB}$. So, in that basis, the "full" sample covariance $\hat{\Cov}[\vec{X}]$ is equal to $A+E$. Hence, $E$ is the matrix $\hat{\Cov}[\vec{X}]$. So we have eigenvalues $\nu_1>\nu_2>\ldots>\nu_p\geq 0$
expressed in the basis formed by the principal components of $\hat{\Cov}[\vec{X}]_{SUB}$.

Clearly we have that $E_{ij}=E_{ji}$ for $\forall j\in 1,2,3,\cdots,p$. Furthermore, in Lemma \ref{lemma2}, we prove that except for $E_{ii}$, $E_{ji}$ are independent of each other and normal distributed with expectation $0$ when conditioning on $X_{SUB}$, namely conditioning on the whole data except column $i$. Also, for $j\neq i$, we have that the variance of $E_{ij}$ is equal to $\nu_i\cdot\nu_j/n$. Furthermore $E_{II}\approx \sigma_i^2$.
	
So we have the diagonal matrix $A$ with elements in the diagonal being $\nu_1>\cdots>\nu_{p-1}$ and $0$. These are also the eigenvalues of $A$.

Then we add the perturbation $E$, which only affects the $i$-the column and row. The new eigenvalues are now $\hat{\sigma}_1>\ldots>\hat{\sigma}_p$. there is one more. We are going to calculate these new eigenvalues as a function of the $E_{ij}$'s. To find new eigenvalues we let any of new eigenvalues be denoted by $\lambda+\Delta\lambda$. Therefore, this would be an eigenvalue of $E+A$. Say the corresponding eigenvector is $\vec{\mu}+\Delta\vec{\mu}$, where $\vec{\mu}$ is an eigenvector of $A$.
	
With these notations, we have:
	\begin{equation}
	\label{A+E_1}
	(A+E)(\vec{\mu}+\Delta\vec{\mu})
	=(\lambda+\Delta\lambda)(\vec{\mu}+\Delta\vec{\mu}).
	\end{equation}
Also, since $\vec{\mu}$ is an eigenvector of $A$, we have:
	\begin{equation}
	\label{Amu_1}
	A\vec{\mu}=\lambda\vec{\mu}
	\end{equation}
Subtracting equation \ref{A+E_1} from \ref{Amu_1}, we find:
	\begin{equation}
	\label{noise}
	(A-I\lambda)\Delta\vec{\mu}=-E\vec{\mu}+\Delta\lambda\vec{\mu}+
	-E\Delta\vec{\mu}+\Delta\lambda\Delta\vec{\mu}.
	\end{equation}
	
Now we are going to use \ref{noise} in our case. But to simplify notation, we take $i=1$ and we take a dimension $p=3$. The formula we find will be valid in general. Also, without loss of generality, we can take $\Delta\vec{\mu}$ perpendicular to $\vec{\mu}$. In our present case $\vec{\mu}=(1,0,0)$ is the first eigenvector of the matrix $A$, which is equal to
	\begin{align*}
	A=\left[
	\begin{array}{ccccccccc}
	0&0&0\\
	0&\nu_1&0\\
	0&0&\nu_2
	\end{array}\right]
	\end{align*}
Since $\Delta\vec{\mu}$ is perpendicular to $\vec{\mu}$, we can write $\Delta\vec{\mu}=(0,\Delta\mu_1,\Delta\mu_2)$ Then, we have the perturbation:
	\begin{equation}
	\label{E}
	E=\left[
	\begin{array}{ccccccccc}
	E_{11}&E_{12}&E_{13}\\
	E_{21}&0&0\\
	E_{31}&0&0
	\end{array}\right]
	\end{equation}
	
So, now we can write out equation \ref{noise} with our special case of $A$ and the perturbation matrix $E$ given in \ref{E} to find:
	
	\begin{align*}
	\label{zaza3}
	&\left[
	\begin{array}{ccccccccc}
	0&0&0\\
	0&\nu_1-\lambda-\Delta\lambda&0\\
	0&0&\nu_2-\lambda-\Delta\lambda
	\end{array}
	\right]
	\left[\begin{array}{ccccccccc}
	0\\
	\Delta\mu_1\\
	\Delta\mu_2
	\end{array}\right]
	\\
	&=
	-
	\left[
	\begin{array}{ccccccccc}
	E_{11}\\
	E_{21}\\
	E_{31}
	\end{array}
	\right]+
	\left[
	\begin{array}{ccccccccc}
	\Delta\lambda\\
	0\\
	0\\
	\end{array}
	\right]
	-\left[
	\begin{array}{ccccccccc}
	0&E_{12}&E_{13}\\
	0&0&0\\
	0&0&0
	\end{array}\right]
	\left[
	\begin{array}{ccccccccc}
	0\\
	\Delta\mu_1\\
	\Delta\mu_2
	\end{array}
	\right]
	\end{align*}
the above equation for matrices can be separated into two parts. The first equation gives us an equation for $\Delta\lambda$:
	\begin{equation}
	\label{Deltalambda_1}
	\Delta\lambda=E_{ii}
	+E_{12}\Delta \mu_2+E_{13}\Delta\mu_3
	\end{equation}
	
Then the remaining equation can be used to calculate $\Delta\vec{\mu}$ so that gives:
	
	\begin{align*}
	&\left[\begin{array}{ccccccccc}
	\nu_1-\lambda-\Delta\lambda&0\\
	0&\nu_2-\lambda-\Delta\lambda
	\end{array}
	\right]
	\left[\begin{array}{ccccccccc}
	\Delta\mu_1\\
	\Delta\mu_2
	\end{array}\right]\\
	&=
	-
	\left[
	\begin{array}{ccccccccc}
	E_{12}\\
	E_{13}
	\end{array}
	\right]
	\end{align*}
we can solve the above equation for $\Delta\vec{\mu}$ and then plug into equation \ref{Deltalambda_1} to find:
	$$\Delta\lambda=E_{11}-\frac{E_{12}^2}{\nu_1-\lambda-\Delta\lambda}-
	\frac{E_{13}^2}{\nu_2-\lambda-\Delta\lambda}$$
So far we have given a three dimensional case. But the last formula above is valid in general and becomes:
 \begin{equation}
 \label{dynamic}
 \Delta\lambda=E_{ii}-\sum_{s=1}^{p-1} \frac{E_{si}^2}{\nu_s-(\lambda+\Delta\lambda)},
 \end{equation}

Here $\lambda$ is the eigenvalue of the restricted covariance matrix $A$, which is equal to $0$. So
 $$\lambda=0$$

Furthermore, $\lambda+\Delta\lambda$ is an eigenvalue of the full sample covariance, which is $A+E=\hat{\Cov}[\vec{X}]$. So, in that case $\lambda+\Delta\lambda=\Delta\lambda$ and hence $\Delta\lambda$ represents an eigenvalue of $A+E$. When we consider the equation \ref{dynamic} as an equation of $\Delta\lambda$ assuming other terms are given, we see that for every interval $[\nu_{s-1},\nu_s], \forall s=2,\ldots,{p-1}$, there is one value inside each interval for $\Delta\lambda$ when solving \ref{dynamic}. This is because RHS of \ref{dynamic} is strictly decreasing going from $\infty$ to $-\infty$ as a function of $\Delta\lambda$. So in each interval $[\nu_{s-1},\nu_s]$ for $s=2,\cdots,p-1$, there is exactly one solution to \ref{dynamic}, and that solution is the eigenvalue $\hat{\sigma}_s^2$ of the "full" sample covariance matrix. This is another way to prove the interlacing property proven in Lemma \ref{interlacing_lemma}, that is we have
 		$$\hat{\sigma}^2_1>\nu_1>\hat{\sigma}^2_2>\nu_2>\ldots>\nu_{p-1}>\hat{\sigma}^2_{p}$$
where we recall that $\nu_j=\hat{\sigma}_{j,SUB}^2$ is the $j$-th eigenvalue in decreasing order of
the restricted sample covariance $\frac{X_{SUB}^T\cdot X_{SUB}}{n}$. Assume eigenvalues $\nu_1,\nu_2,\ldots,\nu_{p-1}$ of the restricted covariance are given. Then the equation \ref{dynamic} is the equation, which determines the "dynamix" of the eigenvalues. It shows when we add one eigenvalue in the true covariance matrix, how it is going to affect all the eigenvalues of the sample covariance. We could view this as a particle process, where we add one column after the other to $X$ and have the eigenvalues viewed as particles evolve.
 		
Now, the equation \ref{dynamic} allows to determine all eigenvalues of the full sample covariance. So for example, the $j$-th eigenvalue:
 		\begin{equation}
 			\label{boulanger}
 			\hat{\sigma}^2_j=E_{ii}-\sum_{s=1}^{p-1} \frac{E_{si}^2}{\nu_s-\hat{\sigma}^2_j}
 				\end{equation}
Conditioning on $X_{SUB}$, which is equivalent to condition on all data columns except the $i$-th, the term $E_{si}$ for $s \neq i$ are independent normal random variables with variance equalling:
 			$$ \Var[E_{si}]=\frac{\sigma^2\nu_s}{n}=\frac{\sigma^2_{i}\sigma^2_{SUB,s}}{n} $$
 			
Also,
 			$$E_{ii}=\frac{\sum_j X^2_{ji}}{n}\approx \Var[X_i]=\sigma_i^2$$
Using the last approximation above, we can rewrite equation \ref{boulanger} as
 			\begin{equation}
 			\label{boulanger2}
 			\hat{\sigma}^2_j-\sigma^2_i\approx \; -\frac{\sigma^2_i}{n}\sum_{s=1}^{p-1} \frac{\nu_s\mathcal{N}^2_{s}}{\nu_s-\hat{\sigma}^2_j}
 			\end{equation}
where $\mathcal{N}_s$ are i.i.d. normal random variables conditioned on $X_{SUB}$. By the interlacing property, we have that $\sigma^2_j$ is between $\nu_{j-1}$ and $\nu_j$. Hence, we are considering the interval with natural number close to $j$, that is
 			$$J^k_j:=[j-K,j+K]$$

How big $K$ needs to be will be discussed later. We want all the terms $\nu_t$, which are "micoscopically close" to $\nu_j$, to have their indexes in the interval $J^K_j$. So we can distinguish between terms $\nu_s$ close to $\nu_j$ (and hence to $\hat{\sigma}^2_j$) and others in equation \ref{boulanger2}. For other terms, since terms $\nu_s-\hat{\sigma}^2_j$ are not macroscopically small, we can replace $\mathcal{N}^2_s$ by their expectations $1$ and obtain:
 			$$\sum_{s\notin J^K_j} \frac{\nu_s\mathcal{N}^2_s}{\nu_s-\hat{\sigma}^2_j}\approx
 			\sum_{s\notin J^K_j} \frac{\nu_s}{\nu_s-\hat{\sigma}^2_j}$$
since the expectation dominates the standard deviation.

 Hence we can go back to \ref{boulanger2} to obtain:
 \begin{equation}
 \label{boulanger3}
 \hat{\sigma}^2_j-\sigma^2_i\approx \; -\frac{\sigma^2_i}{n}
 \left(\sum_{s\notin J^K_j} \frac{\nu_s}{\nu_s-\hat{\sigma}^2_j}
 +\sum_{s\in J^K_j} \frac{\nu_s\mathcal{N}^2_{s}}{\nu_s-\hat{\sigma}^2_j}
 \right)
 \end{equation}

 Note that for big eigenvalues the term:
 $$\hat{\sigma}^2_j-\sigma^2_i+\frac{\sigma^2_i}{n}
 \sum_{s\notin J^K_j} \frac{\nu_s}{\nu_s-\hat{\sigma}^2_j}$$ is strongly positive. Hence, when we try to solve approximation \ref{boulanger3} with small $j$, we will have that terms $\sigma^2_1, \sigma^2_2$ are going to be very close to the corresponding $\nu_s$. For large $j$, when $j$ is closer to $p$, we have that $\sigma^2_j$ is going to be close to $\nu_{j-1}$. When the distance is almost indistinguishable close, we get that basically going from the "particles" $\nu_1,\nu_2,\ldots,\nu_{p-1}$ to the "particles"
 $$\hat{\sigma}_1^2,\hat{\sigma}_2^2,\ldots,\hat{\sigma}^2_p$$

we leave the ones in the border unchanged and can say that somewhere in the middle there has been a particle added.

When $n=c \times p$, if the constant $c$ is really big, then most particles don't move except in a small interval. Here we make an Ansatz, which later we can at least heuristically justify: in our system we can add any value for $\sigma_i$, which is the standard deviation of the column that was left out firstly. The values of $X_{SUB}$ are independent of that value and so are the $\nu_1,\ldots,\nu_{p-1}$. So, we can take any value for $\sigma_i$ and see what the outcome is.
\begin{itemize}
	\item Our Ansatz is that (at least when $n=c \times p$ where $c>0$ is large) we have that the particle added due to adding a column with standard deviation $\sigma_i$ should be added in the same relative position as is the position of $\sigma_j^2$ in the original spectrum.
\end{itemize}

So, in other words, if $\sigma^2_i$ is the $i$-th eigenvalue of the original spectrum, then the additional eigenvalue added should also be about in the $i$-position in the sample spectrum, that is to say that we have:
\begin{equation}
\label{thisterm}
\hat{\sigma}^2_j-\sigma^2_i+\frac{\sigma^2_i}{n}
\sum_{s\notin J^K_j} \frac{\nu_s}{\nu_s-\hat{\sigma}^2_j}
\end{equation}
is neither positive nor negative at the point where the particle is added. So according
to our Ansats, that is for $j=i$ and hence compared to the other terms in \ref{boulanger3},
we would have the term \ref{thisterm} be small order. So that for $j=i$, we would have:

\begin{equation}
\label{thisterm}
\hat{\sigma}^2_i-\sigma^2_i+\frac{\sigma^2_i}{n}
\sum_{s\notin J^K_i} \frac{\nu_s}{\nu_s-\hat{\sigma}^2_j}\approx 0
\end{equation}
which implies

\begin{equation}
\label{boulanger4}
\hat{\sigma}^2_i-\sigma^2_i\approx \; -\frac{\sigma^2_i}{n}
\left(\sum_{s\notin J^K_i} \frac{\nu_s}{\nu_s-\hat{\sigma}^2_j}
\right)
\end{equation}
which is the approximation formula we wanted to justify, or rather the continuous version at the limit. For this remember that $\nu_s=\hat{\sigma}_{SUB,s}^2$.

\subsection{Why Big Constant Makes Particles Being Added Locally}
\label{addenda}
let us consider variables $x$ and $y$ in the following equation:
\begin{equation}
\label{heute}
x=-
\frac{\sigma_i}{n}\sum_{s\in J^K_j} \frac{\nu_s\mathcal{N}^2_{s}}{\nu_s-y}
\end{equation}
In the above equation we assume all values given except $x$ and $y$
and we further assume the constrain
\begin{equation}
\label{condition_coco}
y\in [\nu_{j-1},\nu_{j}].
\end{equation}
Note that the function on the RHS of \ref{heute}, seen as a function of $y$, is strictly decreasing going from $\infty$ to $-\infty$ as $y$ goes from $\nu_{j-1}$ to $\nu_j$.
So we can write $y$ as $y(x)$ and there is no ambiguity assuming that we know \ref{condition_coco} to hold.

Now take $x$ to be equal to:
\begin{equation}
\label{heute2}
x= \frac{\nu_{j-1}-\nu_j}{2}-\sigma^2_i +\frac{\sigma^2_i}{n}
\left(\sum_{s\notin J^K_j} \frac{\nu_s}{\nu_s- (\nu_{j-1}+\nu_{j})/2}\right).
\end{equation}

Now, the sum
\begin{equation}
\label{thislabel}
\frac{\sigma^2_i}{n}
\left(\sum_{s\notin J^K_j} \frac{\nu_s}{\nu_s- \hat{\sigma}^2_j}\right)
\end{equation}
is not too much affected by the exact value of $\hat{\sigma}^2_j$ since
$\hat{\sigma}^2_j$ is contained in the interval $[\nu_{j-1},\nu_j]$ and in the sum there  should be no term close to that interval since we take out all the terms, of which index in $J^K_j$. That is we take out all elements, which are microscopically close to that interval. So, in the sum \ref{thislabel}, we can replace $\hat{\sigma}^2_j$
by any point in the interval given in \ref{condition_coco} and should only get a small order change. So we can replace $\hat{\sigma}^2_j$ by the middle of the interval, which is
$(\nu_j-\nu_{j+1})/2$ and still get a similar value. Hence:
\begin{equation}
\label{thislabel2}
\frac{\sigma^2_i}{n}
\left(\sum_{s\notin J^K_j} \frac{\nu_s}{\nu_s- \hat{\sigma}^2_j}\right)
\approx\frac{\sigma^2_i}{n}
\left(\sum_{s\notin J^K_j} \frac{\nu_s}{\nu_s- (\nu_{j-1}+\nu_{j})/2}\right)
\end{equation}

Applying \ref{thislabel2} to \ref{boulanger3}, we obtain
\begin{equation}
\label{boulanger4}
\hat{\sigma}^2_j-\sigma^2_i\approx \; -\frac{\sigma^2_i}{n}
\left(\sum_{s\notin J^K_j} \frac{\nu_s}{\nu_s-(\nu_{j-1}+\nu_{j})/2}
+\sum_{s\in J^K_j} \frac{\nu_s\mathcal{N}^2_{s}}{\nu_s-\hat{\sigma}^2_j}
\right)
\end{equation}
When we replace $\hat{\sigma}_j^2$ by $(\nu_{j-1}-\nu_j)/2$ on the right side of the above approximation, we have:
$$ x\approx-
\frac{\sigma_i}{n}\sum_{s\in J^K_j} \frac{\nu_s\mathcal{N}^2_{s}}{\nu_s-\hat{\sigma}_j^2}$$
The last approximation above shows that we can determine the value of $\hat{\sigma}^2_j$ up to a small error term by solving equation \ref{heute} for $y$ under the constrain \ref{condition_coco} and where $x$ is defined in \ref{heute2}.

When we consider equation \ref{heute} with the constrain \ref{condition_coco}, then: when $x$ is very negative (large absolute value, but negative), then $y(x)$ is close to $\nu_{j}$. On the opposite when $x$  in \ref{heute} is very large positive and condition \ref{condition_coco} holds, then $y(x)$ is close to $\nu_{j-1}$.

Now, assume that $i$ is somewhere in the middle of the spectrum. Then for $j<<i$ we have that $x$
(as given in \ref{heute2}) is positive
and for $j>>i$ we get that $x$ is negative. In order to understand ,let us consider the following: we assume that $n=C\cdot p$ and the constant $C$ is sufficiently
large. Then there is not a big difference between sample spectrum and population spectrum. The difference is still of order
$O(1))$ but has a small constant in front. So, in the first approximation $x$ is about $\sigma^2_j-\sigma^2_i$, which obviously is positive for $j<i$ and negative for $j>i$. 

Next we want to see when $j<<i$, if $x$ is "big enough" to make the solution $y$ of equation \ref{heute} much closer to $\nu_{j}$. Because in that case, we get that $\hat{\sigma}^2_j$ can be approximately found using equation \ref{heute} and that $\hat{\sigma}^2_j$ is also going to be very close to $\nu_{j}$.
So the point $\nu_j$ will be quite indistinguishable of
$\hat{\sigma}^2_j$ for $j<<i$. We want to prove the opposite, when $j>>i$, that $x$ is negative enough so that the solution of equation \ref{heute} is close to $\nu_{j-1}$. This would then imply that $\hat{\sigma}^2_j$ would be very close
to $\nu_{j-1}$. So, in other words, if we can prove these two things: $x$ gets negative enough for $j<<i$ and positive enough fro $j>>i$,
then when we go from the spectrum
\begin{equation}
\label{I}\nu_1>\nu_2>\ldots>\nu_{p-1}
\end{equation}
to
\begin{equation}
\label{II}
\hat{\sigma}^2_1>\hat{\sigma}^2_2>\ldots>\hat{\sigma}^2_p,
\end{equation}
that in principle  for indexes away from $i$, the eigenvalues don't change too much.
In that case, we can view the effect of going from \ref{I} to \ref{II}
as "adding a particle somewhere in the vicinity of $\nu_i$".

What we need for this to work is a regularity of the particles given in \ref{I}. More specifically, assume that:
\begin{equation}
\label{K}
\nu_{j-1}-\nu_{j}\geq K\times\frac{1}{p}
\end{equation}
where $K>0$ is a constant, the interval $J^k_j$ contains only two integers
$J^K=[j-1,j]$. With this we can now rewrite \ref{heute}
as
\begin{equation}
\label{this}
c \cdot x=-
\frac{\sigma_i}{p} \left( \frac{\nu_{j-1}\mathcal{N}^2_{j-1}}{\nu_{j-1}-y}+\frac{\nu_{j}\mathcal{N}^2_{j}}{\nu_{j}-y}\right)
\end{equation}
where we also use that $n=C\dot p,$ where $C>0$ is a constant.

Now we want $y$, the solution of \ref{this}, to be close to $\nu_j$.
What do we mean by this? We may, for example, request that
$|y-\nu_j| $ is at least $10$  times smaller than $\nu_{j-1}-\nu_j$, which means:
$$\frac{|y-\nu_j|}{\nu_{j-1}-\nu_j}\leq  \frac{1}{10}$$
and hence with the help of \ref{K} we find
$$\frac{1}{|y-\nu_j|\cdot p}\geq  \frac{10}{K}$$
which with the help of \ref{this} we obtain
as long as
$$C\cdot x \geq \frac{10}{K}\cdot \frac{\nu_{j}\mathcal{N}^2_{j}}{\sigma_i^2}$$
The inequality above holds with high probability by simply taking the constant $C>0$ large enough since $\sigma_i^2$, $\nu_j$ and $K$ are all of order $O(1)$ and as long as $x$ is not infinitesimal but of order $O(1)$.

Now, say we want for a large (but constant number) $l$, the solution $y$ of equation \ref{this} to be $l$ times closer to $\nu_j$ than $\nu_{i-1}$. Note that this closeness follows from \ref{K}, \ref{this} and
\begin{equation}
\label{cx}
c\cdot x \geq \frac{l}{K}\cdot \frac{\nu_{j}\mathcal{N}^2_{j}}{\sigma_i^2}
\end{equation}
In other words,
\begin{equation}
\label{l}
\frac{|y-\nu_j|}{\nu_{j-1}-\nu_j}\leq  \frac{1}{l}
\end{equation} follows from
\ref{K}, \ref{this} and \ref{cx}. Now, $\hat{\sigma}^2_j$ is the value for $y$ solving
\ref{this} with the contains $y\in[\nu_{j-1}-\nu_j]$. So, if we take $l$ really large (but constant, think
of a million for example), then $y$ becomes almost indistinguishable from $\nu_j$. This means that for practical purpose, $\hat{\sigma}^2_j$ and $\nu_j=\hat{\sigma}^2_{SUB,j}$ will be indistinguishable. This is for $j<<i$.  Similarly for $j>>i$, we can get that $\hat{\sigma}^2_j$ and $\nu_{j-1}=\hat{\sigma}^2_{SUB,j-1}$ will be practically indistinguishable. This means that between the sample covariance and the restricted sample covariance, the difference is mainly in the eigenvalues around the $i$-th, when we add one eigenvalue of size $\sigma_i^2$. Now we need this result to hold uniformly over $i \in 1,2,\ldots,p$. And we
also need this to hold when $j$ is sufficiently close to $i$. What we want is to obtain that if we take the constant $C$ very big, we get that the effect of adding one additional dimension, for practical purposes, does not change the spectrum except in a narrow region of the spectrum around the $i$-th eigenvalue.

As long as $x$ is of $O(1)$, we can obtain this by simple taking
the constant $C$ in \ref{cx} large enough. So we need $x$ to be
bounded below as long as $i$ and $j$ are not too close.

\subsection{Lemma}
In this section we will introduce some lemma that will be used in the following proof.

The first lemma shows the distribution of the restricted covariance matrix.
Recall that $X$ is an $n\times p$ matrix with independent columns, where
entries in column $j$ have standard deviation $\sigma_j$. In order to compute the restricted covariance matrix, firstly we replace the $i$-th column in $X$ by $0$. We denote the new matrix with a zero column as $X_{SUB}$ and the estimated covariance matrix is
\begin{equation}
\label{covariance_SUB}
\hat{\Cov(X)}_{SUB}=\frac{X^T_{SUB}\cdot X_{SUB}}{n}.
\end{equation}
Then we express our restricted covariance matrix \ref{covariance_SUB} in the basis of its principal components. Note that the $i$-th canonical vector $$(0,0,0,\ldots,0,1,0,\ldots,0) \in \mathbb{R}^{p}$$
is a principal component of \ref{covariance_SUB}, which has a $1$ in its $i$-th entries and $0$'s everywhere else. It is the principal component with corresponding eigenvalue $0$. This is because the matrix \ref{covariance_SUB} has its $i$-column and $i$-th row equal to $0$. The principal components are simply eigenvectors by the definition of the principal components. Therefore, when you express a matrix in the basis of its principal components, the matrix becomes diagonal with eigenvalues along the diagonal. In the present case, eigenvalues are denoted by $\hat{\sigma}_{SUB,j}^2$ and also denoted as $\nu_j=\hat{\sigma}_{SUB,j}^2$. So, we are going to represent the full covariance matrix in the basis using principal components of the sub-matrix $\Cov(X)_{SUB}$. The sub-matrix part gets diagonalized in that basis. Except for the $i$-th column and $i$-th row, we are dealing with a diagonal matrix. This is to say that in that basis of eigenvectors of \ref{covariance_SUB}, the full covariance matrix $\frac{X^T X}{n}$ will take the following form:
\begin{equation}
\label{covariancep}
\left[\begin{array}{ccccccccc}
\nu_1&0&\ldots&0&E_{1i}&0&\ldots&0&0\\
0&\nu_2&\ldots&0&E_{2i}&0&\ldots&0&0\\
0&0&\ldots&0&E_{3i}&0&\ldots&0&0\\
&&&&\ldots&&&&\\
E_{i1}&E_{i2}&\ldots&E_{i(i-1)}&E_{ii}&E_{i(i+1)}&\ldots&E_{i(p-1)}&E_{ip}\\
&&&&\ldots&&&&\\
0&0&\ldots&0&E_{(p-2)i}&0&\ldots&0&0\\
0&0&\ldots&0&E_{(p-1)i}&0&\ldots&\nu_{p-2}&0\\
0&0&\ldots&0&E_{pi}&0&\ldots&0&\nu_{p-1}\\
\end{array}
\right]
\end{equation}
The next lemma shows that the non-diagonal entries, that is $E_{si}$ for $s\neq i$ are independent joint normal distributed with given variance.
\begin{lemma}\label{lemma2}
	Assume that we express the sample covariance matrix $\frac{X^T X}{n}$ in the basis
	using principal components of the restricted matrix $\frac{X^T_{SUB} X_{SUB}}{n}$
	to obtain a matrix given in \ref{covariancep}. Then, conditioning on
	$X_{SUB}$, we have that $E_{ij}$ for $j\neq i$ are independent normal distributed
	with
	\begin{equation}
	\label{this}
	\Var[E_{ij}]=\frac{\nu_j\cdot\sigma^2_i}{n}
	\end{equation}
\end{lemma}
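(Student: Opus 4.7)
The plan is to identify $E_{si}$ (for $s\neq i$) as a simple linear functional of the $i$-th column $X_i$ of $X$ once we condition on $X_{SUB}$, and then use joint normality plus the orthonormality of the principal components of $\hat{\Cov}(X)_{SUB}$ to read off both the variance formula and the independence claim.

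First I would set up notation: let $P$ be the orthogonal matrix whose columns $p_1,\dots,p_p$ are the orthonormal eigenvectors of $\hat{\Cov}(X)_{SUB} = X_{SUB}^T X_{SUB}/n$, arranged so that $p_i = e_i$ (the $i$-th canonical vector), which is legitimate because the $i$-th row and column of $X_{SUB}^T X_{SUB}$ vanish, making $e_i$ a unit eigenvector with eigenvalue $0$. The matrix displayed in \ref{covariancep} is $P^T\,(X^T X/n)\,P$, so for $s\neq i$ the $(s,i)$ entry is
\begin{equation*}
E_{si} \;=\; p_s^{\,T}\,\tfrac{X^T X}{n}\,e_i \;=\; \tfrac{1}{n}\,p_s^{\,T}\, X^T X_i.
\end{equation*}
Because $p_s$ is orthogonal to $e_i$, its $i$-th coordinate vanishes, and so $X p_s = X_{SUB}\,p_s$. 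Hence
\begin{equation*}
E_{si} \;=\; \tfrac{1}{n}\,(X_{SUB}\,p_s)^{T}\, X_i.
\end{equation*}

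Next I would condition on $X_{SUB}$. Since the columns of $X$ are independent and column $i$ of $X$ is independent of every other column, the conditional law of $X_i$ given $X_{SUB}$ is just its marginal law, namely $N(0,\sigma_i^2 I_n)$. Therefore each $E_{si}$ is, conditionally, a fixed linear functional $\tfrac{1}{n}(X_{SUB}p_s)^T X_i$ of a Gaussian vector, so it is centered Gaussian. Its conditional variance is
\begin{equation*}
\Var[E_{si}\mid X_{SUB}] \;=\; \tfrac{\sigma_i^2}{n^2}\,\|X_{SUB}\,p_s\|^2 \;=\; \tfrac{\sigma_i^2}{n}\,p_s^{\,T}\,\hat{\Cov}(X)_{SUB}\,p_s \;=\; \tfrac{\sigma_i^2\,\nu_s}{n},
\end{equation*}
using $\hat{\Cov}(X)_{SUB}\,p_s = \nu_s p_s$ and $\|p_s\|=1$; this is exactly \ref{this}.

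For the joint independence statement, I would compute, for $s_1\neq s_2$ with both different from $i$,
\begin{equation*}
\Cov(E_{s_1 i},E_{s_2 i}\mid X_{SUB}) \;=\; \tfrac{\sigma_i^2}{n^2}\,p_{s_1}^{\,T}\,X_{SUB}^T X_{SUB}\,p_{s_2} \;=\; \tfrac{\sigma_i^2\,\nu_{s_2}}{n}\,p_{s_1}^{\,T} p_{s_2} \;=\; 0,
\end{equation*}
by orthonormality of the principal components. Since the joint vector $(E_{si})_{s\neq i}$ is conditionally jointly Gaussian (each entry is linear in the same Gaussian vector $X_i$) and pairwise uncorrelated, it is mutually independent. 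The only minor care point, which I view as the main nuisance rather than a real obstacle, is the handling of possible degeneracies in the spectrum of $\hat{\Cov}(X)_{SUB}$: if some $\nu_s$ has multiplicity greater than one the eigenvectors are not unique, but since any orthonormal basis of the eigenspace works and $X_{SUB}^T X_{SUB}$ still acts as a scalar on that subspace, the computation above is unchanged. This establishes both the variance formula and the conditional independence.
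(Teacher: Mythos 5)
Your proposal is correct and follows essentially the same route as the paper: conditionally on $X_{SUB}$, the off-diagonal entries of the $i$-th column are linear functionals of the Gaussian column $X_i$, hence jointly normal, and the principal-component basis of $X_{SUB}^T X_{SUB}/n$ diagonalizes their covariance, giving $\Var[E_{si}]=\nu_s\sigma_i^2/n$ and independence. The only difference is cosmetic: you compute $E_{si}=\tfrac{1}{n}(X_{SUB}p_s)^T X_i$ directly in the new basis, while the paper first writes the covariance structure in the original coordinates and then invokes the general fact that an orthogonal change to the PCA basis yields independent components with variances equal to the eigenvalues.
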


\begin{proof}
	Now, assume given an i.i.d. sequence of normal random variables
	$$N_1,N_2,\ldots,N_{p-1}$$
	with expectation $0$ and standard deviation $\sigma$.
	Let
	$$Y:=\sum_j a_j N_j$$ and $$Z=\sum_j b_j N_j$$
	where the $a_j$'s and the $b_j$'s are non-random coefficients.
	Then, $Y$ and $Z$ are jointly normal with covariance:
	\begin{equation}
	\label{COVYZ}
	\Cov(Y,Z)=\sigma^2 \sum_j a_j b_j.
	\end{equation}
	Now, let us look at the sample covariance matrix $\frac{X^T X}{n}$ before the change of basis. If we look at the entry in the $i$-th row and $s$-th column and denote it by $E_{is}^*$ for $s\neq i$. The entry is the product of $i$-th column and $s$-th column of matrix $X$ and divided by $n$. We condition on $X_{SUB}$, which means we condition $i$-th column is a column of i.i.d normal random variables with standard deviation $\sigma_i$. In this situation, we can conclude that the entry $E_{is}^*$ for $s\neq i$ are jointly normal distributed conditioned on $X_{SUB}$. Because these entries are the results of dot product between a vector of coefficients and a vector of i.i.d normal random variables.
	The vector of i.i.d normal random variables is the $i$-th column of $X$.
	According to formula \ref{COVYZ}, in order to find  the covariance
	$$\Cov(E_{is},E_{it})$$
	we need to take the dot product between coefficient vectors.
	Here $E_{si}$ is the dot product of the $s$-th column of $X$(coefficient vector) and the $i$-th column of $X$(random variables) and $E_{it}$ is the dot product of the $t$-th column of $X$(coefficient vector) and the $i$-th column of $X$(random variables). So the covariance
	$\Cov(E_{is},E_{it}$ ) is the product of the $s$-th column times the $t$-th column times $\sigma_i^2$ divided by $n^2$. But this is the $s,t$-th entry of the sample covariance times $\sigma_i^2/n$. In other words conditioning on $X_{SUB}$, the coefficients $E_{is}$ for $s\neq i$, are jointly normal distributed with their covariance matrix equal to product of sample covariance matrix and coefficient $\sigma^2_i/n$ . Now, when you change for a normal vector the basis and take the principal component as a basis, you get a normal vector with independent components and where the variance of the components are the eigenvalues of the original covariance matrix. In our case, these variances are $\nu_s\frac{\sigma_i^2}{n}$ which proves
	\ref{this}
\end{proof}
In the end, we would like to introduce another lemma that will be helpful for the proof. It can be derived from Cauchy Interfacing Theorem and illustrates the relationship between eigenvalues of full sample matrix and eigenvalues of its sub-matrix.
\begin{lemma}\label{interlacing_lemma}
	Assume we have full sample covariance matrix $X$ and its sub-matrix defined in the previous part $X_{SUB}$. There always exists an orthogonal projection $P$ such that:
	$$ P^*\times X\times P = X_{SUB}$$
	If we let $\sigma_{SUB,j}$ represent the $j$-th eigenvalue of $X_{SUB}$ and let $\sigma_{j}$ represent the $j$-th eigenvalue of full sample covariance matrix $X$.
	Also we assume all eigenvalues are sorted in descending order, which means:
	$$ \sigma_1 > \sigma_2 > \cdots > \sigma_n $$
	and
	$$\sigma_{SUB,1} > \sigma_{SUB,2} > \cdots > \sigma_{SUB,n}$$
	Then we have the interlacing property:
	$$ \sigma_{j} > \sigma_{SUB,j} > \sigma_{j+1}, \forall j \in {1,2,\cdots,n-1}$$
\end{lemma}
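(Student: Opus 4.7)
The plan is to reduce the claim to the classical Cauchy interlacing theorem and derive it via the Courant--Fischer min--max characterization of eigenvalues. First I would verify existence of $P$: taking $P$ to be the orthogonal projection of $\mathbb{R}^n$ onto the hyperplane $V := e_i^\perp$ (equivalently, the $n\times (n-1)$ matrix obtained from $I_n$ by deleting its $i$-th column, viewed as an isometric embedding), one checks directly that $P^* X P$ equals $X_{SUB}$ after the obvious identification $V \cong \mathbb{R}^{n-1}$. The key observation is that $X$ and $X_{SUB}$ agree as quadratic forms on $V$, so their Rayleigh quotients coincide there.

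Next I would invoke the min--max theorem: for symmetric $X$ with eigenvalues $\sigma_1 \geq \cdots \geq \sigma_n$,
\[
\sigma_j \;=\; \max_{\substack{S \subset \mathbb{R}^n \\ \dim S = j}} \; \min_{v \in S \setminus \{0\}} R(v), \qquad R(v) := \frac{v^* X v}{v^* v},
\]
and the analogous formula holds for $\sigma_{SUB,j}$ with the max restricted to subspaces $S \subset V$.

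The upper bound $\sigma_{SUB,j} \leq \sigma_j$ is then immediate, since the feasible set for the $\sigma_{SUB,j}$ max is a strictly smaller collection of subspaces. For the lower bound $\sigma_{SUB,j} \geq \sigma_{j+1}$ I would use a dimension count: any $(j+1)$-dimensional subspace $S \subset \mathbb{R}^n$ meets $V$ in dimension at least $(j+1)+(n-1)-n = j$, so a $j$-dimensional subspace $S' \subset S \cap V$ exists. Then $\min_{v \in S'} R(v) \geq \min_{v \in S} R(v)$ (since $S' \subset S$), and $\min_{v \in S'} R(v) \leq \sigma_{SUB,j}$ (since $S' \subset V$); taking the max over all $(j+1)$-dimensional $S$ yields $\sigma_{j+1} \leq \sigma_{SUB,j}$. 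Combined with the upper bound this produces the weak interlacing $\sigma_{j+1} \leq \sigma_{SUB,j} \leq \sigma_j$.

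The only nontrivial point, and what I expect to be the main obstacle, is the strictness claimed in the lemma: Cauchy interlacing is a priori only weak. Equality in either direction would force an eigenvector of $X$ associated to $\sigma_j$ (respectively $\sigma_{j+1}$) to lie entirely in the hyperplane $V = e_i^\perp$, i.e.\ to have a vanishing $i$-th coordinate. In the paper's setting $X$ is a sample covariance built from a data matrix with i.i.d.\ continuous (Gaussian) entries, so almost surely the eigenvalues are simple and no eigenvector lies on a coordinate hyperplane; hence strict interlacing holds almost surely. I would close by recording this genericity observation, noting that for the arguments in the preceding sections only the weak form is ever actually invoked.
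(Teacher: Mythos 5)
Your proposal is correct, and its skeleton is the same as the paper's: construct the $n\times(n-1)$ matrix $P$ (identity with the $i$-th column deleted, i.e.\ the isometric embedding of $e_i^\perp$), observe $P^*XP=X_{SUB}$, and conclude by Cauchy interlacing. The difference is that the paper invokes the Cauchy interlacing theorem as a black box (``apply Cauchy interlacing theorem directly''), whereas you reprove it via Courant--Fischer, which makes the argument self-contained; your dimension count $\dim(S\cap V)\ge j$ and the two Rayleigh-quotient comparisons are exactly right. More substantively, you flag and repair a point the paper silently skips: the lemma asserts \emph{strict} inequalities, while Cauchy interlacing only yields weak ones. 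Your resolution is sound --- equality would force a common eigenvalue of $X$ and $X_{SUB}$, hence an eigenvector of $X$ with vanishing $i$-th coordinate (for a simple eigenvalue $\lambda$ of the submatrix with unit eigenvector $u$, the bordered determinant identity gives $\det(X-\lambda I)=-|b^*u|^2\prod_{\mu\neq\lambda}(\mu-\lambda)$, so equality requires $b^*u=0$), and since conditionally on $X_{SUB}$ the added column is a continuous random vector, this event has probability zero, so strictness holds almost surely --- and your closing remark that only the weak form is ever used downstream matches how the paper actually employs the lemma.
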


\begin{proof}
	Without loss of generality, we may assume matrix $X$ is a $n\times n$ matrix and we get $X_{SUB}$ by deleting the last column and last row of matrix $X$.
	
	Now we consider a $n \times n-1$ projection matrix $P$ as following:
	\begin{equation}
	\left[\begin{array}{ccccccccc}
	1 & 0 & \cdots & 0 \\
	0 & 1 & \cdots & 0 \\
	\vdots & \vdots & \cdots & \vdots\\
	0 & 0 & \cdots & 1\\
	0 & 0 & \cdots & 0
	\end{array}\right]
	\end{equation}
	Then we have:
	$$ P^*\times X\times P = X_{SUB}$$
	Next, we apply Cauchy interlacing theorem directly and have:
	$$ \sigma_{j} > \sigma_{SUB,j} > \sigma_{j+1}, \forall j \in {1,2,\cdots,n-1}$$
\end{proof}

\section{Conclusion}
In this paper, we prove an analytical formula to reconstruct spectrum in 2 different cases. Moreover, when we have both $n$ and $p$ go to $\infty$ and their ratio $C$ is a fixed constant, we find the approximation error is of order $o(\frac{1}{C})$. In this way, we show our formula has negligible error when $C$ is large enough.

%
\section*{Conflict of interest}
The authors declare that they have no conflict of interest.

\bibliography{main.bbl}

\end{document}